\documentclass[12pt]{amsart}

\usepackage{color}

\usepackage{amsmath,amssymb,setspace,nicefrac, yhmath, amscd,eucal,enumitem}
\usepackage[active]{srcltx}
\usepackage[colorlinks, linkcolor=red, citecolor=blue, urlcolor=blue, hypertexnames=true]{hyperref}
\usepackage{amsrefs, mathrsfs}

\setlength{\textwidth}{15cm} \setlength{\textheight}{21cm}
\setlength{\oddsidemargin}{0.5cm} \setlength{\topmargin}{0cm}
\setlength{\evensidemargin}{0.5cm} \setlength{\topmargin}{0cm}

\allowdisplaybreaks

\newcommand{\R}{{\mathbb R}}

\newcommand{\Z}{{\mathbb Z}}
\newcommand{\N}{{\mathbb N}}
\newcommand{\F}{{\mathbb F}}

\newcommand{\K}{{\mathbb K}}

\newcommand{\cC}{{\mathcal C}}

\newcommand{\cP}{{\mathcal P}}
\newcommand{\cN}{{\mathcal N}}

\newcommand{\cR}{{\mathcal R}}

\newcommand{\cH}{{\mathcal H}}

\newcommand{\Aut}{{\rm Aut}}

\newcommand{\ldense}{{\rm ldense}}
\newcommand{\udense}{{\rm udense}}
\newcommand{\dense}{{\rm dense}}
\newcommand{\inspec}{{\rm inspec}}

\newcommand{\diam}{{\rm diam}}
\newcommand{\hspec}{{\rm hspec}}

\newtheorem{thm}{Theorem}[section]
\newtheorem{cor}[thm]{Corollary}
\newtheorem{lem}[thm]{Lemma}

\newtheorem{definition}[thm]{Definition}

\newtheorem{remark}[thm]{Remark}
\newtheorem{proposition}[thm]{Proposition}

\theoremstyle{definition}

\begin{document}
    \title[Riordan groups over finite fields]{Algebraic and topological properties of Riordan groups over finite fields}

    \author{Gi-Sang Cheon }
    \author{Nhan-Phu Chung}
    \author{Minh-Nhat Phung}
   \address{Department of Mathematics, Sungkyunkwan University, 2066 Seobu-ro, Jangan-gu, Suwon-si, Gyeonggi-do 16419, Korea. Tel: +82 031-299-4819}
        \address{Applied Algebra and Optimization Research Center, Sungkyunkwan University, Korea.}
         \address{Institute of Applied Mathematics, University of Economics Ho Chi Minh city, Vietnam. }
    \email{Gi-Sang Cheon, gscheon@skku.edu}
    \email{Nhan-Phu Chung, phucn@ueh.edu.vn;phuchung82@gmail.com}
    \email{Minh-Nhat Phung, pmnt1114@gmail.com}
    \subjclass[2010]{Primary 20E18, 28A78; Secondary 20F05, 20F14 }

\keywords{Riordan group, Nottingham group, Pro-p-group, Hausdorff dimension, Formal power series}

    \date{\today}
    \maketitle
    \begin{abstract}
        In this paper, we investigate algebraic and topological properties of the Riordan groups over finite fields. These groups provide a new class of  topologically finitely generated, abstract, pro-p groups with finite width. We also introduce, characterize index-subgroups of our Riordan groups, and finally we show exactly the range of Hausdorff dimensions of these groups. The latter results are analogous to the work of Barnea and Klopsch for the Nottingham groups.
    \end{abstract}
    \section{Introduction}
    The Riordan group over the field of real or complex numbers was introduced in 1991 by Shapiro and his collaborators \cite{Shapiro} in the framework of enumerative combinatorics. After that, it has been investigated by many authors with applications in combinatorics, computer science, group theory, matrix theory, number theory, Lie groups and Lie algebras, orthogonal polynomials, graph theory, Heisenberg-Weyl algebra. A non-exhaustive list of works related to Riordan groups is given by \cite{MR4119405,MR3464075,MR2451085,MR3771671,MS,MSV,Sprugnoli94, Sprugnoli11} and references there in. Recently, in 2017 via the inverse limit approaches of Riordan groups initiated in \cite{LMMPS}, the authors established an infinite-dimensional Fr\'{e}chet Lie group and the corresponding Lie algebra on the Riordan group \cite{CLMPS}.

    In this article, we study Riordan groups $\cR(\K)$ over general unital commutative rings $\K$ and more specially for $\K=\F_q$ finite fields. To our knowledge, this is the first paper investigating properties of these Riordan groups. Instead of using the representation of Riordan groups via Riordan arrays which are infinite lower triangular matrices, we construct them as semi-direct products of $\cH(\K)$, the group of formal series with the free coefficient is 1, and $\cN(\K)$, the substitution group of formal power series. The group $\cN(\K)$ was first introduced in 1954 by Jennings \cite{Jennings}, and was brought attention later to the group theory community by Johnson \cite{Johnson} and his Ph.D student York \cite{York1, York}. It is known under the name of the Nottingham group now. Since then, the group $\cN(\F_p)$, where $\F_p$ is the finite field with a prime number $p$, has been extensively studied in the literature. It has many remarkable properties and plays an important role in the theory of profinite groups \cite{Babenko13,BS,Index,Camina,dSF,Ershov, Ershov1, Klopsch,Sz}. As our Riordan group $\cR(\F_p)$ contains the Nottingham group $\cN(\F_p)$ as a subgroup, we would like to investigate which properties of $\cN(\F_p)$ still hold for $\cR(\F_p)$.

    It was proved that the Nottingham groups $\cN(\F_p)$ and $\cN(\Z)$ are topologically finitely generated \cite{Camina,BB08}. On the other hand, the group $\cN(\F_p)$ has finite width \cite{LM}. Our first main result is to show that $\cR(\F_p)$ and $\cR(\Z)$ are topologically finitely generated, and $\cR(\F_p)$ has finite width.
    \begin{thm}
        \label{T-main1}
        Let $\K$ be a finite commutative unital ring, $p\geq 2$ be a prime number and $\F_q$ be a finite field with characteristic $p$. Then
        \begin{enumerate}
        \item the Riordan group $\cR(\K)$ is profinite;
        \item the Riordan group $\cR(\F_q)$ is a pro-p-group with finite width;
            \item the Riordan groups $ \cR(\F_p) $  and $ \cR(\Z) $ are topologically finitely generated. Furthermore, for every topologically generating set $S_N$ of the Nottingham group $ \cN(\F_p)$ (respectively $\cN(\Z))$, the group $ \cR(\F_p) $ (respectively $ \cR(\Z) $) is generated by $$ \{(1+x,x), (1,g):g\in S_N\} .$$
            \end{enumerate}
    \end{thm}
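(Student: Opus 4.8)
The plan is to compare the lower central series (equivalently, the lower $p$-central series) of $\cR(\F_q)$ with the natural ``diagonal'' filtration attached to the decomposition $\cR(\F_q)=\cH(\F_q)\rtimes\cN(\F_q)$, and then to feed in the finite-width property of the Nottingham group. Write $\cH_n=1+x^n\F_q[[x]]$ and $\cN_n=\{f\in\cN(\F_q):f\equiv x\bmod x^{n+1}\}$ for the two congruence filtrations, so that $\cH=\cH_1$, $\cN=\cN_1$, $\cH_n/\cH_{n+1}\cong\F_q\cong\cN_n/\cN_{n+1}$, and each $\cH_n$ is normal in $\cR(\F_q)$ because $\cH$ is abelian and the $\cN$-action by substitution preserves it. Set $\cR_n=\cH_n\rtimes\cN_n$. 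A direct computation with the group law gives $[\cR_m,\cR_n]\subseteq\cR_{m+n}$, $\cR_n^p\subseteq\cR_{n+1}$ and $\bigcap_n\cR_n=1$; thus $\{\cR_n\}$ is a descending central filtration by open normal subgroups with $|\cR_n/\cR_{n+1}|=q^2$ for all $n$, and in particular $\gamma_n(\cR(\F_q))\subseteq\cR_n$. Since the projection $\pi\colon\cR(\F_q)\to\cN(\F_q)$ is a surjective homomorphism with kernel $\cH$, we have $\pi(\gamma_n(\cR(\F_q)))=\gamma_n(\cN(\F_q))$, whence the upper bound $\gamma_n(\cR(\F_q))\subseteq\cH_n\rtimes\gamma_n(\cN(\F_q))$.

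The technical core is a matching lower bound: there is a constant $c=c(q)$ with $\gamma_n(\cR(\F_q))\cap\cH\supseteq\cH_{n+c}$ for every $n$. For this one produces sufficiently many elements of $\cH$ inside the lower central terms: for $h=1+ax^j\in\cH_j$ and $\sigma=x\pm x^{k+1}\in\cN_k$ the commutator $[h,\sigma]$ lies in $\cH_{j+k}$ with leading term of the form $\mp ajx^{j+k}$, so, letting $a$ range over $\F_q$ and choosing for each target degree $m\ge i+1$ an admissible splitting $m=j+k$ with $j\ge i$, $k\ge1$ and $p\nmid j$, the closed normal subgroup generated by $[\cH_i,\cN]$ is seen to contain $\cH_{i+1}$ except possibly at a bounded set of residues $m$ (those forced to have $p\mid j$, together with the genuine anomalies at $p=2,3$), which are recovered from iterated commutators and from $[\gamma_i(\cN),\cH]$. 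Applying this repeatedly, together with $[\cH_{n+c},\cN]\subseteq\cH_{n+c+1}$, yields the lower bound, and in the generic range the exact identity $\gamma_n(\cR(\F_q))=\cH_n\rtimes\gamma_n(\cN(\F_q))$.

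Combining the two bounds with the short exact sequence
\[
1\longrightarrow(\gamma_n\cap\cH)/(\gamma_{n+1}\cap\cH)\longrightarrow\gamma_n/\gamma_{n+1}\longrightarrow\gamma_n(\cN(\F_q))/\gamma_{n+1}(\cN(\F_q))\longrightarrow1
\]
(where $\gamma_\ast=\gamma_\ast(\cR(\F_q))$) gives $|\gamma_n/\gamma_{n+1}|=|\gamma_n(\cN(\F_q))/\gamma_{n+1}(\cN(\F_q))|\cdot|(\gamma_n\cap\cH)/(\gamma_{n+1}\cap\cH)|$. The second factor is at most $[\cH_n:\cH_{n+1+c}]=q^{c+1}$, since $\cH_{n+c}\subseteq\gamma_n\cap\cH\subseteq\cH_n$ and likewise one step down; the first factor is at most $p^{w}$, where $w$ is the width of the Nottingham group $\cN(\F_q)$ (the case $q=p$ being \cite{LM}). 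Hence $|\gamma_n(\cR(\F_q))/\gamma_{n+1}(\cR(\F_q))|$ is bounded independently of $n$, i.e.\ $\cR(\F_q)$ has finite width, completing the proof of Theorem~\ref{T-main1}.

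I expect the main obstacle to be the lower bound in the second paragraph: pinning down $[\cH_i,\cN]$ and the higher terms precisely enough to control which congruence directions of $\cH$ are hit, and disposing of the exceptional indices divisible by $p$ and the anomalies at $p=2,3$ — exactly the point where the analogous analysis for the Nottingham group is delicate. A variant that avoids most of this casework is to pass to the associated graded Lie algebra $L=\bigoplus_n\cR_n/\cR_{n+1}$, a graded $\F_p$-Lie algebra with $\dim_{\F_p}L_n=2[\F_q:\F_p]$ for all $n$ and with $L^{(n)}$ concentrated in degrees $\ge n$; bounding $\dim_{\F_p}L^{(n)}/L^{(n+1)}$ by linear algebra and invoking the standard correspondence between a group carrying a central filtration and its graded Lie algebra then gives finite width of $\cR(\F_q)$.
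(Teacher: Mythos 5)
Your proposal addresses only part (4) of the theorem. Parts (1)--(3) --- profiniteness of $\cR(\K)$ for a general finite commutative ring, the pro-$p$ property of $\cR(\F_q)$, and above all the topological finite generation of $\cR(\F_p)$ and $\cR(\Z)$ together with the \emph{explicit} generating set $\{(1+x,x),(1,g):g\in S_N\}$ --- are not touched at all, so the closing claim that the argument ``completes the proof of Theorem~\ref{T-main1}'' is unjustified. Part (3) in particular is not a formality: the paper proves it (Proposition~\ref{P-finitely generated}) by first showing that the elements $\varphi(g)(h)h^{-1}$ with the single choice $h=1+x$ and $g$ ranging over $\cN^{m-1}$ already generate all of $\cH^m$ (Lemma~\ref{L-Hm}), and then showing that $1+x$ together with $\cH^2$ generates $\cH$ (with a separate twist for $\K=\Z$, where one cannot reduce modulo $p$). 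None of this is recoverable from filtration estimates, and the statement about the generating set is a concrete combinatorial claim that your write-up never engages with.

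For part (4) itself, your skeleton --- sandwich $\gamma_n(\cR(\F_q))$ between $\cH_{n+c}\rtimes\gamma_n(\cN(\F_q))$ and $\cH_n\rtimes\gamma_n(\cN(\F_q))$, then quote finite width of the Nottingham group --- is sound in outline, but you explicitly defer the lower bound $\gamma_n(\cR(\F_q))\cap\cH\supseteq\cH_{n+c}$, and that bound is the entire content of the step. The paper closes exactly this gap by computing the lower central series on the nose: a general formula $[H_1\rtimes G_1,H_2\rtimes G_2]=L\rtimes[G_1,G_2]$ for semidirect products with abelian kernel (Lemma~\ref{L-commutator subgroups of semi-direct products}), combined with Lemma~\ref{L-Hm} --- where the fixed choice $h=1+x$, of degree $1$ never divisible by $p$, is precisely what eliminates your ``exceptional residues'' --- yields $\gamma_n(\cR(\F_q))=\cH^{n+[(n-2)/(p-1)]}(\F_q)\rtimes\cN^{n+1+[(n-2)/(p-1)]}(\F_q)$ for $p>2$, whence $|\gamma_n/\gamma_{n+1}|\le q^4$. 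Your proposed Lie-algebra shortcut does not avoid this work: knowing $\dim_{\F_p}L_n=2[\F_q:\F_p]$ gives no bound on $\dim_{\F_p}L^{(n)}/L^{(n+1)}$ without computing the brackets, which is the same calculation in different clothing. As written, the proposal establishes at most a conditional version of one of the theorem's four assertions.
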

    The definitions of $\cR(\K)$, topologically finitely generating sets and groups with finite width will be presented in Section 2.

    On the other hand, the study of Hausdorff dimension in profinite groups has been initiated by Abercrombie \cite{Ab}. Given a filtration $\{G_n\}_n$ of an infinite profinite group $G$, we can define an induced invariant metric on $G$ and then we can compute the Hausdorff dimension of a closed subgroup $H$ of $G$ with respect to this metric via the following formula
    $$\dim_H(H)=\liminf_{n\to \infty}\frac{\log |HG_n/G_n|}{\log |G/G_n|}.$$
    After that, in \cite{BS}, given an infinite profinite group $G$, Barnea and Shalev investigated the dimension spectrum of $G$
    $$\hspec(G):=\{\dim_H(H):H \mbox{ is a closed subgroup of } G\}.$$
    Using Lie algebra ideas they showed that if $p\geq 5$ is a prime number then
    $$\bigg\{\frac{1}{s}:s\in \N\bigg\}\subset \hspec(\cN(\F_p))\subset [0,\frac{3}{p}]\cup\bigg\{\frac{1}{s}:s\in \N\bigg\}.$$
    Later, Barnea and Klopsch introduced index-subgroups, a large class of closed subgroups of $\cN(\F_p)$ \cite{Index}. Then they partially extended the above result of \cite{BS} as they proved that for $p>2$
    $$\inspec(\cN(\F_p))=[0,\frac{1}{p}]\cup\bigg\{\frac{1}{p}+\frac{1}{p^r}:r\in \N\bigg\}\cup\bigg\{\frac{1}{s}\bigg\},$$
    where $$\inspec(\cN(\F_p)):=\{\dim_H(H):H \mbox{ is an index-subgroup of } \cN(\F_p)\}\subset \hspec(\cN(\F_p)).$$
    Inspired by the work of \cite{Index}, we introduce index-subgroups of the Riordan group $\cR(\F_p)$. In our second main result of the article, we characterize these index-subgroups and given a standard filtration of $\cR(\F_p)$ we describe exactly the set $$\inspec(\cR(\F_p)):=\{\dim_H(H):H \mbox{ is an index-subgroup of } \cR(\F_p)\}\subset \hspec(\cR(\F_p)).$$
    \begin{thm}
        \label{T-main 2}
        Let $p>2$ be a prime number and given the filtration $\{ \cH^n(\F_p)\rtimes \cN^n(\F_p) \}_n$ of $\cR(\F_p)$, we have \begin{align*}
            \inspec(\cR(\F_p))=[0,\dfrac{1}{p}]&\cup\bigg\{\dfrac{1}{p}+\dfrac{1}{2p^r}|r\in\N\bigg\}\cup\bigg\{\dfrac{1}{2}+\dfrac{1}{2p}+\dfrac{1}{2p^r}|r\in\N\bigg\}\\&\cup\bigcup_{s<p}[\dfrac{1}{2s},\dfrac{1}{2s}(1+\dfrac{1}{p})]\cup\bigg\{\dfrac{1}{2sp^r}+\dfrac{1}{2su}|s,u\in\N,r\in\N\cup\{0\}\bigg\}.
        \end{align*}
    \end{thm}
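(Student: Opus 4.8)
The plan is to transfer the strategy of Barnea and Klopsch \cite{Index} for the Nottingham group to the Riordan setting, using the semidirect decomposition $\cR(\F_p)=\cH(\F_p)\rtimes\cN(\F_p)$ and the filtration $\{\cR^n(\F_p)=\cH^n(\F_p)\rtimes\cN^n(\F_p)\}_n$. Since $|\cR(\F_p):\cR^n(\F_p)|=|\cH(\F_p)/\cH^n(\F_p)|\cdot|\cN(\F_p)/\cN^n(\F_p)|=p^{2n+O(1)}$, each layer $\cR^n/\cR^{n+1}\cong(\cH^n/\cH^{n+1})\times(\cN^n/\cN^{n+1})$ is a two-dimensional $\F_p$-space, which is already the source of the factor $\tfrac12$ pervading the statement (where $\inspec(\cN(\F_p))$ carries $\tfrac1s$, here one gets $\tfrac1{2s}$). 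Following \cite{Index} I would define an index-subgroup of $\cR(\F_p)$ as a closed subgroup $H$ whose images $H\cR^n/\cR^n$ are prescribed, layer by layer, by a combinatorial datum generalizing the index sets of \cite{Index} to these two-dimensional layers, compatibly with the group law; the dimension formula from the introduction then specializes to
$$\dim_H(H)=\liminf_{n\to\infty}\frac{\log_p|H\cR^n/\cR^n|}{2n},$$
so that $\inspec(\cR(\F_p))$ is the set of liminf values of normalized partial counts of admissible index data.

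The classification of admissible data is governed by the associated graded Lie algebra $L=W\ltimes A$ of $(\cR(\F_p),\{\cR^n\})$: here $W=\bigoplus_{i\geq 1}\F_p L_i$ is the graded (modular) Witt-type algebra of $\cN(\F_p)$, with $[L_i,L_j]=(j-i)L_{i+j}$, and $A=\bigoplus_{j\geq 1}\F_p m_j$ is the abelian graded algebra of $\cH(\F_p)$, on which $W$ acts through the substitution action by $[L_i,m_j]=j\,m_{i+j}$. An admissible index datum corresponds to a graded subobject of $L$ respecting the $p$-power operation; it projects to a graded subalgebra $\Gamma\leq W$, which by \cite{Index} is, away from finitely many layers, of the form $\bigoplus_{i\equiv r\,(s),\,i\geq i_0}\F_p L_i$ for a period $s$ and residue $r$, and meets $A$ in a $\Gamma$-invariant graded subspace $V\leq A$, with a possible graph contribution. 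I would then split into three regimes: (i) $H\leq\cH(\F_p)$, where since $\cH(\F_p)$ is abelian every closed subgroup is index-like and, intersected with the admissible range, yields the block $[0,\tfrac1p]$ (the slow-growth index-subgroups of $\cN(\F_p)$ contribute the same block); (ii) $H$ surjecting onto a period-$s$ index-subgroup of $\cN(\F_p)$ with $\Gamma$-invariant $\cH$-part; (iii) genuinely mixed, graph-type subgroups. Regimes (ii)--(iii) carry the new content: one must determine, for $\Gamma$ of period $s$, exactly which graded subspaces $V\leq A$ are $\Gamma$-invariant. The relation $j\,m_{i+j}\in V$ for $L_i\in\Gamma$, $m_j\in V$ propagates an index $j$ to $i+j$ unless $p\mid j$; when $s<p$ we have $\gcd(s,p)=1$, so within any residue class modulo $s$ the obstruction $p\mid j$ occurs only along a sub-progression, forcing the asymptotic density of an admissible $V$ into the band $[\tfrac1s,\tfrac1s(1+\tfrac1p)]$, the upper endpoint arising from having to restore the $p$-divisible indices. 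Halving produces the blocks $[\tfrac1{2s},\tfrac1{2s}(1+\tfrac1p)]$, while graph and $p$-power refinements of $\Gamma$ and $V$ give the discrete corrections $\tfrac1p+\tfrac1{2p^r}$, $\tfrac12+\tfrac1{2p}+\tfrac1{2p^r}$ and $\tfrac1{2sp^r}+\tfrac1{2su}$. For $s\geq p$ the $\Gamma$-module $A$ degenerates (too many of the structure constants $j$ vanish modulo $p$ and the $p$-power map on $\cH(\F_p)$ intervenes), so no value outside the above blocks occurs — which is why the union runs only over $s<p$.

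For the reverse inclusion I would exhibit, for each value on the right-hand side, an explicit index-subgroup attaining it: for the interval parts, subgroups depending on a real threshold parameter, realized in the liminf by interleaving long stretches of full and partial layers; for the sporadic points, concrete subgroups assembled from congruence subgroups of $\cN(\F_p)$, their $p$-power analogues, and semidirect products with $\Gamma$-invariant subgroups of $\cH(\F_p)$, then verifying admissibility and computing the liminf directly. The main obstacle, as flagged above, is regime (ii)--(iii): pinning down the full lattice of $\Gamma$-invariant graded subspaces of $A$ for every period-$s$ index-subgroup $\Gamma$ of $\cN(\F_p)$, controlling the exact extra density $\tfrac1{2s}\cdot\tfrac1p$ forced onto the $\cH$-part, and correctly handling the $p$-power operation together with the threshold $s=p$. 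This is the genuinely new phenomenon beyond \cite{Index}, where the $\cH$-factor is absent, and it produces both the interval blocks $[\tfrac1{2s},\tfrac1{2s}(1+\tfrac1p)]$ and the new families of isolated points in $\inspec(\cR(\F_p))$.
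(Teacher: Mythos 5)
Your proposal has the right global shape (the factor $\tfrac12$ from the two-dimensional layers, the dimension as a liminf of normalized counts, explicit witnesses for the reverse inclusion), but there are two genuine gaps. First, you never pin down what an index-subgroup of $\cR(\F_p)$ is; the theorem is a statement about $\inspec$, which only makes sense once that class is fixed. The paper's definition is concrete and combinatorial: $\cR(I,J)=\{(1+\sum_{i\in I}a_ix^i,\,x+\sum_{j\in J}b_jx^{j+1})\}$ for subsets $I,J\subseteq\N$, with $(I,J)$ admissible when this is a subgroup. Second, and more seriously, your classification via the graded Lie algebra $W\ltimes A$ gets the structure of the two factors backwards. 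The subgroup condition on the $\cH$-part is that $\cH(I)$ be closed under \emph{multiplication}, which forces $I$ to be closed under addition and hence (eventually) a full arithmetic progression $sp^r\N$; so $\ldense(I)$ can only take the discrete values $0$ and $1/(sp^r)$, never a continuum. This constraint is invisible in the abelian graded piece $A$, where every graded subspace is a subalgebra, so your framework would admit far too many $\cH$-parts. Conversely, the interval blocks $[\tfrac1{2s},\tfrac1{2s}(1+\tfrac1p)]$ do \emph{not} come from a band of densities of $\Gamma$-invariant subspaces $V\leq A$ as you claim: in the paper they come from the Nottingham factor, by taking $I=s\N$ (rigid, density $1/s$) and $J=J(\xi)\cap s\N$ with $\xi\in[0,1/p]$ ranging over the Barnea--Klopsch sets of prescribed lower density.

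A related problem is that your propagation rule $[L_i,m_j]=j\,m_{i+j}$ only records the first-order term of the substitution action. At the group level, $1+x^i$ composed with $x+x^{j+1}$ produces $1+\sum_n\binom{i}{n}x^{i+nj}$, so the compatibility condition between $I$ and $J$ involves all $n$ with $p\nmid\binom{i}{n}$; it is exactly these higher binomial terms (via Lucas' theorem) that force $J\subseteq s\N$ when $I$ is cofinite in $sp^r\N$ and that produce the exponents $p^r$ and the threshold behaviour in the isolated families $\tfrac1{2sp^r}+\tfrac1{2su}$ and $\tfrac12+\tfrac1{2p}+\tfrac1{2p^r}$. The single Lie bracket cannot see them. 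The paper avoids Lie algebras entirely and works at the group level in the style of Barnea--Klopsch: admissibility is characterized by (1) $J$ being a Nottingham index-set, (2) $I$ closed under addition, (3) the binomial condition linking $I$ and $J$; then a case analysis on the resulting classification, together with $\dim_H(\cR(I,J))=\tfrac12(\ldense(I)+\ldense(J))$, yields both inclusions. To repair your argument you would need to replace the graded-Lie-algebra classification by this group-level combinatorics.
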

    In contrast to the Nottingham group $\cN(\F_p)$, Hausdorff dimensions of the Riordan group $\cR(\F_p)$ do depend on the choice of the filtrations.

    Our paper is organized as follows. In Section 2 we define our Riordan group $\cR(\K)$ as a semi-direct product of $\cH(\K)$ and $\cN(\K)$, and then we present it as an inverse limit of topological groups. After that we show that both $\cR(\F_p)$ and $\cR(\Z)$ are topologically finitely generated, calculate the lower central series of $\cR(\F_q)$ and then prove Theorem \ref{T-main1}. Finally, in Section 3 we first review the Hausdorff dimension on metric spaces arising from profinite groups, introduce and characterize the index-subgroups of $\cR(\F_p)$, and finally we present a proof of Theorem \ref{T-main 2}.

    \textbf{Acknowledgements:} This work was partially supported by Science Research Center Program through the National Research Foundation of Korea (NRF) Grant funded by the Korean Government (MSIP) (NRF-2016R1A5A1008055). G.-S. Cheon was partially supported by the NRF-2019R1A2C1007518. N.-P. Chung and M.-N. Phung were partially supported by the NRF-2019R1C1C1007107. We thank Minho Song for pointing out \cite[Proposition 3.3]{GG} to us.
    \section{Riordan groups and their properties}
    In this note, we put $\N_0=\N\cup\{0\}$ and $p$ is always a prime number.

    For every unital commutative ring  $\K$, we denote by $\K[[x]]$ the set of all formal series with coefficients in $\K$, i.e.
    $$\K[[x]]:=\big\{\sum_{i=0}^\infty a_ix^i:a_i\in \K, \mbox{ for every } i\in \N_0\big\}.$$
    Under the usual multiplication, we know that an element $g=\sum_{i=0}^\infty a_ix^i\in \K[[x]]$ is invertible if and only if $a_0$ is invertible in $\K$.

    We denote $\cH(\K):=\{1+a_1x+\dots\in \K[[x]]\}$ and endow it with the usual multiplication operation $\cdot:\cH(K)\times \cH(K)\to \cH(K)$ then $(\cH(\K),\cdot) $ is a group. We consider $\cN(\K)$ the set of all $g=x+a_2x^2+\dots+a_nx^n+\dots \in\K[[x]]$, and define the substitution operation $\circ: \cN(\K)\times \cN(\K)\to \cN(\K)$ by
    $$h\circ g:=g(h)=h+\sum_{i=2}^\infty a_ih^i,$$
    for every $g=x+\sum_{i=2}^\infty a_ix^i, h=x+\sum_{i=2}^\infty b_ix^i\in \cN(\K)$.
    It is well known that $(\cN(\K),\circ)$ is a group \cite[Theorem 1.1]{Jennings}. It is called the Nottingham group which has been studied intensively.

    Now we define our Riordan group as a semidirect product of $\cH(\K)$ and $\cN(\K)$ as follows. We denote by $\Aut(\cH(\K))$ the group of all automorphisms of $\cH(\K)$.
    For every $h= 1+\sum_{i=1}^\infty b_ix^i\in \cH(\K)$ and $g\in \cN(\K)$, the substitution $g\circ h=1+\sum_{i=1}^\infty b_ig^i$ is still in $\cH(\K)$.
    Let $\varphi: \cN(\K)\to \Aut(\cH(\K))$ be the map defined by $\varphi(g)(h)=g\circ h$ for every $h\in \cH(\K), g\in \cN(\K)$ then it is clear that $\varphi$ is well defined and indeed it is a homomorphism.
    Let $\cR(\K)$ be the semi-direct product $\cH(\K)\rtimes_\varphi \cN(\K)$. The multiplication operation in $\cH(\K)\rtimes_\varphi \cN(\K)$ is defined by
    $$(h_1,g_1)\cdot (h_2,g_2):=(h_1\varphi(g_1)h_2,g_1\circ g_2)=(h_1h_2(g_1),g_1\circ g_2),$$ for every $(h_1,g_1),(h_2,g_2)\in \cH(\K)\times \cN(\K)$.
    For every $(h,g)\in \cH(\K)\rtimes_\varphi \cN(\K)$, its inverse is
  $(h,g)^{-1}=(\varphi(\bar{g})(h^{-1}),\bar{g})=(h^{-1}(\bar{g}),\bar{g})$, where $hh^{-1}=1$ and $g\circ {\bar g}={\bar g}\circ g=x$.


    We start with the following elementary lemma which will be used several times later.
    \begin{lem} Let $h=1+\sum_{i=n}^\infty a_ix^i\in \cH(\K)$ and $g=x+\sum_{j=m}^\infty b_jx^j\in \cN(\K)$. Then
        \begin{enumerate}
            \label{L-simple computations}
            \item the invertible element $h^{-1}$ in $\cH(\K)$ is $1+\sum_{i=n}^\infty (p_i-a_i)x^i$, where $p_n=0$ and $p_k$ is a polynomial depending only on $a_n,\cdots, a_{k-1}$ for every $k>n$;
            \item For every $k\geq n$, the coefficient at degree $k$ of $h(g)$ is: \begin{enumerate}[label=(\roman*)]
                \item $ a_k $ if $ n\leq k<m+n-1 $,
                \item $ na_nb_m+a_{m+n-1} $ if $ k=m+n-1 $,
                \item $ na_nb_{k-n+1}+q_{k} $, where $ q_{k} $ is a polynomial depending only on $a_n,\cdots, a_{k}$ and $b_m,\cdots, b_{k-n}$ if $ k> m+n-1 $;
            \end{enumerate}
            \item For every $k\geq n$, the coefficient at degree $k$ of $h(g)h^{-1}$ is \begin{enumerate}[label=(\roman*)]
                \item $ 0 $ if $ n\leq k<m+n-1 $,
                \item $ na_nb_m $ if $ k=m+n-1 $,
                \item $ na_nb_{k-n+1}+r_{k} $, where $ r_{k} $ is a polynomial depending only on $a_n,\cdots, a_{k}$ and $b_m,\cdots, b_{k-n}$ if $ k> m+n-1 $.
            \end{enumerate}
        \end{enumerate}
    \end{lem}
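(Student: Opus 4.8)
The plan is to prove the three parts in sequence, each feeding into the next, by direct coefficient comparison.

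\emph{Part (1).} I would write $h^{-1}=\sum_{i\ge0}c_ix^i$ with $c_0=1$ and compare coefficients in $h\cdot h^{-1}=1$. For $k\ge1$ this yields the recursion $c_k=-\sum_{i=n}^{k}a_ic_{k-i}$, the sum being empty (so $c_k=0$) when $1\le k<n$. At $k=n$ we get $c_n=-a_n$, so $p_n=0$ works. For $k>n$, peeling off the $i=k$ term gives $c_k=-a_k-\sum_{i=n}^{k-1}a_ic_{k-i}$; I set $p_k:=-\sum_{i=n}^{k-1}a_ic_{k-i}$ and observe, by induction, that every $a_i$ and every $c_{k-i}$ occurring has index $<k$, so that $p_k$ depends only on $a_n,\dots,a_{k-1}$, as claimed.

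\emph{Part (2).} I would factor $g=x(1+u)$ with $u=\sum_{j\ge m}b_jx^{j-1}$, a series of order $m-1\ge1$, so $g^i=x^i(1+u)^i=x^i+ix^iu+\binom{i}{2}x^iu^2+\cdots$. Since $x^iu=\sum_{j\ge m}b_jx^{i+j-1}$ has lowest term $ib_mx^{i+m-1}$ while $x^iu^\ell$ has order $\ge i+2(m-1)>i+m-1$ for $\ell\ge2$, the coefficient of $x^k$ in $g^i$ equals $1$ for $k=i$, equals $0$ for $i<k<i+m-1$, equals $ib_m$ for $k=i+m-1$, and equals $ib_{k-i+1}$ plus a polynomial in $b_m,\dots,b_{k-i}$ for $k>i+m-1$. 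As only the terms with $n\le i\le k$ contribute to $[x^k]h(g)=\sum_i a_i[x^k]g^i$, the three cases follow by summation: for $n\le k<m+n-1$ only $i=k$ survives; for $k=m+n-1$ only $i=n$ (contributing $na_nb_m$) and $i=k$ (contributing $a_{m+n-1}$) survive; and for $k>m+n-1$ the index $i=n$ produces the flagged summand $na_nb_{k-n+1}$ while all remaining contributions are polynomials in $a_n,\dots,a_k$ and $b_m,\dots,b_{k-n}$.

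\emph{Part (3).} By part (2)(i), together with the trivial vanishing of both series below degree $n$, $h(g)$ and $h$ agree modulo $x^{m+n-1}$, so $h(g)-h$ is divisible by $x^{m+n-1}$ and therefore $h(g)h^{-1}-1=(h(g)-h)h^{-1}$ is divisible by $x^{m+n-1}$ as well, which is case (i). By part (2)(ii) the lowest term of $h(g)-h$ is $na_nb_mx^{m+n-1}$, and since $h^{-1}=1+\cdots$ the coefficient of $x^{m+n-1}$ in the product is unchanged, which is case (ii). For case (iii), part (2)(iii) gives that the coefficient of $x^\ell$ in $h(g)-h$ for $\ell\ge m+n-1$ is $na_nb_{\ell-n+1}$ plus a polynomial in $a_n,\dots,a_\ell$ and $b_m,\dots,b_{\ell-n}$; multiplying by $h^{-1}$, whose coefficients are polynomials in the $a_i$ by part (1), the coefficient of $x^k$ receives $na_nb_{k-n+1}$ from the leading $1$ of $h^{-1}$, and a short index check ($j\ge n$ forces $(k-j)-n+1\le k-n$) shows that every remaining cross term involves only $b$'s of index $\le k-n$ and $a$'s of index $\le k$. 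The routine work is the chain of coefficient comparisons; the only point demanding care is the bookkeeping in (2)(iii) and (3)(iii): one must verify that the flagged coefficient $na_nb_{k-n+1}$ is genuinely isolated, i.e. that no other term in the relevant convolutions ever reaches the $b$-index $k-n+1$. This comes down to the inequality $m\ge2$ — which pushes $x^iu^\ell$ to order strictly above $i+m-1$ when $\ell\ge2$ — together with the observation that for $i>n$ the coefficient of $x^k$ in $g^i$ involves only $b$'s of index $\le k-i+1\le k-n$.
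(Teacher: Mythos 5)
Your proposal is correct and follows essentially the same route as the paper: part (1) by the convolution recurrence from $hh^{-1}=1$, part (2) by expanding $g^i=x^i(1+u)^i$ and checking that the $u^\ell$-terms with $\ell\ge 2$ (using $m\ge 2$) and the indices $i>n$ never reach the $b$-index $k-n+1$, and part (3) by convolving with $h^{-1}$. The only cosmetic difference is in part (3), where your identity $h(g)h^{-1}-1=(h(g)-h)h^{-1}$ packages the cancellation that the paper re-derives term by term from the recurrence of part (1); the index bookkeeping you flag as the delicate point is exactly the content of the paper's argument, and you have carried it out correctly.
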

    \begin{proof}
        (1) We write $h^{-1}$ in the form $1+c_1x+c_2x^2+\cdots c_{n-1}x^{n-1}+\sum_{i=n}^\infty (p_i-a_i)x^i$.
            Because $ hh^{-1}=1 $ we get that $c_1=\cdots=c_{n-1}=0$, $p_n=0$ and  for every $k\geq n+1$,  $p_n $ must satisfy the recurrence \begin{align*}
                a_k+p_k-a_k+\sum_{i=n}^{k-1}a_i(p_{k-i}-a_{k-i})=0.
            \end{align*}
            (2)
            Reminding that $\left( \sum_{j=1}^{\infty}x_j \right)^{n}=\sum_{\mathbf{t}}c_{\mathbf{t}}\prod_{j=1}^{\infty}x_j^{t_j}$, where $\mathbf{t}=(t_1,t_2,\dots)\subset\N\cup\{0\}$ satisfies $\sum_{j=1}^{\infty}t_j=n$, and the coefficient $c_{\mathbf{t}}=\frac{n!}{\prod_{j=1}^{\infty}t_j!}$.
			Using the formula for $n=i, x_1=1$ and $x_{j-m+2}=b_{j}x^{j-1},j\ge m$ to expand $ h(g) $ we get that \begin{align*}
                h(g)=&1+\sum_{i=n}^\infty a_{i}x^{i}(1+\sum_{j=m}^\infty b_jx^{j-1})^{i}\\
                =&1+\sum_{i=n}^{\infty}a_i\left(\sum_{(t_j)^i_{j\geq m}}\dfrac{i!}{(i-S_1({(t_j)^i_{j\geq m}}))!}\prod_{j}\dfrac{b_{j}^{t_j}}{t_j!}\right)x^{i+S_2({(t_j)^i_{j\geq m}})},
            \end{align*} where $ (t_j)^i_{j\geq m} $ is taken over all sequences $ (t_j)_{j\geq m}\subset\N\cup\{0\} $ such that $ S_1:=\sum_{j\geq m}t_j\leq i $, and where $ S_2((t_j)_{j\geq m}):=\sum_{j\geq m}(j-1)t_j $. In order to find coefficient for $ x^k $, we consider $ i+S_2({(t_j)^i_{j\geq m}})=k $.
            \begin{enumerate}[label=(\roman*)]
                \item For $ k<m+n-1 $ we only have the case $ i=k,S_2({(t_j)^i_{j\geq m}})=0 $ so $ S_1=0 $ and $ \prod_{j}\dfrac{b_{j}^{t_j}}{t_j!}=1 $. Thus, the coefficient is $ a_k $.
                \item For $ k=m+n-1 $ we have $ i=n,S_2({(t_j)^i_{j\geq m}})=m-1 $ or $ i=m+n-1,S_2({(t_j)^i_{j\geq m}})=0 $. If $ i=n,S_2({(t_j)^i_{j\geq m}})=m-1 $ we have $ na_nb_m $ and if $ i=m+n-1,S_2({(t_j)^i_{j\geq m}})=0 $ we have $ a_{m+n-1} $.
                \item If $ t_j>0 $ for $ j>k-n+1 $ then $ S_2({(t_j)^i_{j\geq m}})>k-n $. So, we only consider $ t_j=0 $ for $ j>k-n+1 $. For $ j=k-n+1 $ if $ t_j>1 $ then $ S_2({(t_j)^i_{j\geq m}})\geq 2(k-n)>k-n $. So $ t_{k-n+1}=0 $ or $ t_{k-n+1}=1 $. The case $ t_{k-n+1}=1 $ leads to $ S_2({(t_j)^i_{j\geq m}})=k-n $, this means that $ t_{k-n+1} $ is the only non-zero number in the sequence and $ i=n $. Thus, $ S_1=1 $ and $ \prod_{j}\dfrac{b_{j}^{t_j}}{t_j!}=b_{k-n+1} $. Hence, we get the part $ na_nb_{k-n+1} $ in the coefficient. For $ q_k $, we have $ t_j=0 $ for $ j\geq k-n+1 $ so we get the desired result.
            \end{enumerate}
            (3) From (1) we get that the coefficient at degree $ k $ of $ h(g)h^{-1} $ is $ \sum_{i=0}^{k}c_i(p_{k-i}-a_{k-i}) $, where $ c_i $ is the coefficient at degree $ i $ of $ h(g) $.
            \begin{enumerate}[label=(\roman*)]
                \item For $ n\leq k<m+n-1 $ we have $ c_i=0 $ if $ 0<i<n $, and $ c_i=a_i $ if $ n\leq i\leq k $.  \begin{align*}
                    \sum_{i=0}^{k}c_i(p_{k-i}-a_{k-i})=a_k+(p_k-a_k)+\sum_{n}^{k-1}a_i(p_{k-i}-a_{k-i})=0.
                \end{align*}
                \item Because $ c_{m+n-1}=na_nb_m+a_{m+n-1} $ so \begin{align*}
                    \sum_{i=0}^{k}c_i(p_{k-i}-a_{k-i})=na_nb_m+a_k+(p_k-a_k)+\sum_{n}^{k-1}a_i(p_{k-i}-a_{k-i})=na_nb_m.
                \end{align*}
                \item For $ k>m+n-1 $, we have \begin{align*}
                    \sum_{i=0}^{k}c_i(p_{k-i}-a_{k-i})=na_nb_{k-n+1}+q_k+(p_k-a_k)+\sum_{n}^{k-1}c_i(p_{k-i}-a_{k-i}).
                \end{align*}
                We see that the property $ r_k=q_k+(p_k-a_k)+\sum_{n}^{k-1}c_i(p_{k-i}-a_{k-i}) $ is a polynomial depending only on $ a_n,\ldots,a_k $ and $ b_m,\ldots,b_{k-n} $ follows from (1) and (2).
            \end{enumerate}

    \end{proof}
    \subsection{Profiniteness of $\cR(\K)$}
    Before proving properties of the Riordan group $\cR(\K)$, let us review the definitions of inverse limit groups and profinite groups. For more details of profinite groups, see \cite{RZ,Wilson}. A \textit{directed set} is a partially order set $(I,\leq)$ such that for all $a,b\in I$, there exists an element $c\in I$ such that $a\leq c$ and $b\leq c$. Let $I$ be a directed set. An \textit{inverse system} $(X_i,\pi_{ij})_{i,j\in I, i\leq j}$ of topological spaces consists of a family $(X_i)_{i\in I}$ of topological spaces and a family of continuous maps $(\pi_{ij}: X_j\to X_i)_{i,j\in I, i\leq j}$ such that $\pi_{ii}=Id_{X_i}$, the identity map of $X_i$ for every $i\in I$, and $\pi_{ik}=\pi_{ij}\pi_{jk}$ for every $i\leq j\leq k$. If each $X_i$ is a topological group and each $\pi_{ij}$ is a continuous homomorphism then $(X_i,\pi_{ij})$ is called an inverse system of topological groups. An \textit{inverse limit} of
    an inverse system  $(X_i,\pi_{ij})$ of topological groups is a topological group $X$ with a continuous homomorphisms $\pi_i:X\to X_i$ satisfying $\pi_{ij}\circ\pi_j=\pi_i$ for every $i\leq j$ in $I$ and the following universal property: whenever $(\varphi_i:Y\to X_i)$ is a family of continuous homomorphisms such that $\pi_{ij}\varphi_j=\varphi_i$ for every $i\leq j$ in $I$ there is a unique continuous homomorphism $\varphi:Y\to X$ such that $\pi_i\varphi=\varphi_i$ for every $i\in I$. By the universal property, two inverse limit group of  $(X_i,\pi_{ij})$ will be isomorphic. Let $P=\prod_{i\in I}X_i$ denote the direct product of groups $X_i$ then
    $$X:=\{(x_i)\in P: \pi_{ij}(x_j)=x_i \mbox { for all } i,j\in I \mbox{ such that } i\leq j\}$$ is a subgroup of $P$. Then the group $X$ is the inverse limit group of $(X_i)$ and we denote $X=\varprojlim X_i$. We endow $X$ with the topology induced from the product topology of $\prod_{i\in I}X_i$.

    Let $\cC$ be a class of groups. A group $H\in\cC$ is called a $\cC$-group and an inverse limit group $G$ of $\cC$-groups is called a pro-$\cC$-group. In the special case where $\cC$ is the class of all finite groups endowed with the discrete topology, we call an inverse limit group $G$ of $\cC$-groups is profinite. Furthermore, if $p$ is a prime and $G=\varprojlim G_i$, where $G_i$ is a $p$-group for every $i$, we say that $G$ is a pro-$p$-group. Recall that a finite group is called a $p$-group, where $p$ is a prime, if its order is a power of $p$.

    Now we are ready to prove our first result in this section that the Riordan group $\cR(\K)$ is profinite for every finite commutative ring $\K$.

    For every $n\in \N$, we define
    \begin{align*}
        \cH^n(\K):&=\{h(x)=1+a_nx^n+a_{n+1}x^{n+1}+\cdots\in \cH(\K)\}, n\geq 2,\\
        \cH^1(\K):&=\cH(\K),\\
        \cN^n(\K):&=\{g(x)=x+a_{n+1}x^{n+1}+a_{n+2}x^{n+2}+\cdots\in \cN(\K)\}, n\geq 2,\\
        \cN^1(\K):&=\cN(\K).
    \end{align*}

    For every $m,n\in \N$, we consider the subsets $\cR^{m,n}(\K):=\cH^m(\K)\rtimes_{\varphi_n|_{\cH^{m}(\K)}} \cN^n(\K)$ of $\cR(\K)$, where $\varphi_n$ is the restriction of $\varphi:\cN(\K)\to \Aut(\cH(\K))$ to $\cN^n(\K)$ and $ \varphi_n|_{\cH^{m}(\K)} $ is defined by the restriction of $ \varphi_n(g) $ to $ \cH^m(\K) $ for each $ g\in \cN^n(\K) $. We use the notation $ \cR^n(\K) $ when $ m=n $.
    \begin{lem}\label{Rmn}
        \begin{enumerate}
            \item For every $m,n\in \N$, the map $ \varphi_n|_{\cH^{m}(\K)}:\cN^n(\K)\to\Aut(\cH^{m}(\K)) $ is well defined and therefore so is $ \cR^{m,n}(\K) $.
            \item $ \cR^{m_1,n_1}(\K) $ is a normal subgroup of $ \cR^{m_2,n_2}(\K) $ for $ m_2+n_1\geq m_1\geq m_2$ and $n_1\geq n_2 $.
        \end{enumerate}
    \end{lem}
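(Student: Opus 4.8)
The plan is to reduce both statements to the coefficient estimates of Lemma~\ref{L-simple computations} together with the (crucial) fact that $\cH(\K)$ is abelian, being a subgroup of the units of the commutative ring $\K[[x]]$. For part (1), I would first observe that $\cH^m(\K)$ and $\cN^n(\K)$ are subgroups of $\cH(\K)$ and $\cN(\K)$ respectively, and that $\varphi(g)$ maps $\cH^m(\K)$ into itself for every $g\in\cN^n(\K)$: if $h=1+\sum_{i\geq m}a_ix^i\in\cH^m(\K)$ and $g=x+\sum_{j\geq n+1}b_jx^j\in\cN^n(\K)$, then by Lemma~\ref{L-simple computations}(2) all coefficients of $h\circ g$ in degrees $1,\dots,m-1$ vanish, so $h\circ g\in\cH^m(\K)$. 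Since $g^{-1}\in\cN^n(\K)$ as well, $\varphi(g^{-1})$ also maps $\cH^m(\K)$ into itself, and as $\varphi(g^{-1})=\varphi(g)^{-1}$ on $\cH(\K)$, the restriction $\varphi_n|_{\cH^m(\K)}(g)$ is a bijection of $\cH^m(\K)$, hence an automorphism. It is a homomorphism in $g$ because $\varphi$ is, so $\varphi_n|_{\cH^m(\K)}\colon\cN^n(\K)\to\Aut(\cH^m(\K))$ is a well-defined homomorphism and the semidirect product $\cR^{m,n}(\K)$ makes sense.

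For part (2), the hypotheses $m_1\geq m_2$ and $n_1\geq n_2$ give the inclusions $\cH^{m_1}(\K)\leq\cH^{m_2}(\K)$ and $\cN^{n_1}(\K)\leq\cN^{n_2}(\K)$, so $\cR^{m_1,n_1}(\K)$ is a subgroup of $\cR^{m_2,n_2}(\K)$. To prove normality I would take $(h_1,g_1)\in\cR^{m_2,n_2}(\K)$ and $(h,g)\in\cR^{m_1,n_1}(\K)$ and, using associativity of substitution and $(h_1,g_1)^{-1}=(h_1^{-1}\circ g_1^{-1},g_1^{-1})$, compute
\begin{equation*}
(h_1,g_1)(h,g)(h_1,g_1)^{-1}=\bigl(\,h_1\cdot(h\circ g_1)\cdot(h_1^{-1}\circ g),\ \ g_1\circ g\circ g_1^{-1}\,\bigr),
\end{equation*}
where the second component simplifies because $g_1^{-1}\circ g_1=x$. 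The second component lies in $\cN^{n_1}(\K)$ because $\cN^{n_1}(\K)$ is normal in $\cN(\K)$ (a standard fact; it also follows from a short degree computation) and $g_1\in\cN^{n_2}(\K)\subseteq\cN(\K)$.

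It remains to show that the first component lies in $\cH^{m_1}(\K)$, and this is the only point needing care. Using commutativity of $\cH(\K)$ I would rewrite it as $(h\circ g_1)\cdot\bigl(h_1\cdot(h_1^{-1}\circ g)\bigr)$. Here $h\circ g_1\in\cH^{m_1}(\K)$ by part (1), since $h\in\cH^{m_1}(\K)$ and $g_1\in\cN^{n_2}(\K)$. For the remaining factor, set $\tilde h:=h_1^{-1}\in\cH^{m_2}(\K)$ (Lemma~\ref{L-simple computations}(1)); then, again by commutativity, $h_1\cdot(h_1^{-1}\circ g)=(\tilde h\circ g)\cdot\tilde h^{-1}$, which is exactly of the shape treated in Lemma~\ref{L-simple computations}(3) with $\tilde h\in\cH^{m_2}(\K)$ and $g\in\cN^{n_1}(\K)$; that lemma gives that all its coefficients in degrees $m_2,\dots,m_2+n_1-1$ vanish, i.e.\ $h_1\cdot(h_1^{-1}\circ g)\in\cH^{m_2+n_1}(\K)$. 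Hence the first component lies in $\cH^{m_1}(\K)\cdot\cH^{m_2+n_1}(\K)\subseteq\cH^{\min(m_1,\,m_2+n_1)}(\K)=\cH^{m_1}(\K)$, using $m_2+n_1\geq m_1$, and therefore the conjugate lies in $\cH^{m_1}(\K)\rtimes\cN^{n_1}(\K)=\cR^{m_1,n_1}(\K)$, as required. I expect the main obstacle to be precisely this first-component computation: one must notice that $\cH(\K)$ is abelian in order to isolate the factor $(\tilde h\circ g)\cdot\tilde h^{-1}$ whose order of vanishing is controlled by Lemma~\ref{L-simple computations}(3), and this is exactly where $m_2+n_1\geq m_1$ is used, whereas $m_1\geq m_2$ and $n_1\geq n_2$ serve only to ensure the inclusion $\cR^{m_1,n_1}(\K)\subseteq\cR^{m_2,n_2}(\K)$.
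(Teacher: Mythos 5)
Your proof is correct and follows essentially the same route as the paper: both parts reduce to Lemma~\ref{L-simple computations}, with normality obtained by using commutativity of $\cH(\K)$ to isolate a factor of the form $(\tilde h\circ g)\cdot\tilde h^{-1}\in\cH^{m_2+n_1}(\K)$ and invoking normality of $\cN^{n_1}(\K)$ in $\cN(\K)$. The only cosmetic differences are that you conjugate on the opposite side and establish surjectivity in part (1) via $\varphi(g)^{-1}=\varphi(g^{-1})$ preserving $\cH^m(\K)$, where the paper argues by contradiction on the lowest nonvanishing degree.
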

    \begin{proof}
        (1) We need to show that for any $ g\in \cN^n(\K) $ and $ h\in \cH^m(\K) $ there exists a unique $ h'\in \cH^m(\K) $ such that $ h'\circ g=h $. Since we already had $ \varphi(g)\in\Aut(\cH(\K)) $ so there exists a unique $ h'\in \cH(\K) $ such that $ h'\circ g=h $. We want to show $ h'\in \cH^m(\K) $. Suppose $ h'\in \cH(\K)\setminus \cH^m(\K) $ then there exist $ m'<m$ and $a_{m'}\neq 0 $ such that $ h'=1+a_{m'}x^{m'}+\mathscr{O}(x^{m'+1}) $. Because $ g=x+\mathscr{O}(x^{n+1}) $ so $ h=h'\circ g=1+a_{m'}x^{m'}+\mathscr{O}(x^{m'+1})\notin \cH^m(\K) $, a contradiction.

            (2) For every $ h_i\in \cH^{m_i},g_j\in \cN^{n_j} $ we have
            \begin{align*}
	      (h_2,g_2)^{-1}(h_1,g_1)(h_2,g_2)&=(\varphi(\bar{g_2})(h_2^{-1}),\bar{g_2})(h_1,g_1)(h_2,g_2)\\
	      &=(\varphi(\bar{g_2})(h_2^{-1})\varphi(\bar{g_2})(h_1),\bar{g_2}\circ g_1)(h_2,g_2)\\
	      &=(\varphi(\bar{g_2})[h^{-1}_2h_1\varphi(g_1)(h_2)],\bar{g_2}\circ g_1\circ g_2).
            \end{align*}
            Let $ h=1+ax^m+\mathscr{O}(x^{m+1}) $ and $ g=x+bx^{n+1}+\mathscr{O}(x^{n+2}) $ then applying Lemma \ref{L-simple computations} we have \begin{align*}
                h(g)h^{-1}=(h+abmx^{m+n}+\mathscr{O}(x^{m+n+1}))h^{-1}\in \cH^{m+n}(\K) .
            \end{align*}
	  Hence, $ \varphi(\bar{g_2})[h^{-1}_2h_1\varphi(g_1)(h_2)]\in \cH^{m_1}(\K) $. From \cite[Lemma 2.3]{Babenko13}, we also have $ \bar{g_2}\circ g_1\circ g_2\in \cN^{n_1}(\K) $. The proof is complete.
\end{proof}
    If $\K$ is a finite unital commutative ring then the group $\cR_n(\K):=\cR(\K)/\cR^n(\K) $ is finite for every $n\in \N$. Furthermore, if $p$ is a prime and $\K=\F_q$ a finite field with characteristic $p$ then $\cR_n(\K)$ is a $p$-group for every $n\in \N$. Therefore, we get the following corollary.
    \begin{cor}
    \label{C-profinite}
        For every unital commutative ring $\K$, $ \cR(\K)=\varprojlim\cR_n(\K)$. If $\K$ is finite then $\cR(\K)$ is profinite. Furthermore, if $\K= \F_q$ a finite field with characteristic $p$, where $p$ is a prime then $\cR(\K)$ is a pro-$p$-group.
    \end{cor}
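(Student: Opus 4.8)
The plan is to realize $\cR(\K)$ concretely as the inverse limit of the finite quotients $\cR_n(\K)$, and then to read off profiniteness and the pro-$p$ property from the orders of these quotients.

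First I would check that the subgroups $\cR^n(\K)=\cH^n(\K)\rtimes\cN^n(\K)$ form a descending chain of normal subgroups of $\cR(\K)$. Normality of $\cR^n(\K)$ in $\cR(\K)$ is the special case $m_1=n_1=n$, $m_2=n_2=1$ of Lemma \ref{Rmn}(2), whose hypotheses $m_2+n_1=1+n\ge n=m_1\ge 1=m_2$ and $n_1=n\ge 1=n_2$ hold for all $n\ge 1$; and $\cR^{n+1}(\K)\subseteq\cR^n(\K)$ is immediate from the definitions. Hence the natural surjections $\pi_{mn}\colon\cR_n(\K)\to\cR_m(\K)$ (for $m\le n$) make $(\cR_n(\K),\pi_{mn})$ an inverse system, the canonical projections $\pi_n\colon\cR(\K)\to\cR_n(\K)$ are compatible with it, and the universal property yields a canonical homomorphism $\Phi\colon\cR(\K)\to\varprojlim\cR_n(\K)$.

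Next I would show $\Phi$ is an isomorphism. For injectivity it suffices to observe $\bigcap_n\cR^n(\K)=\{(1,x)\}$: an element lying in every $\cR^n(\K)$ has all coefficients of $h-1$ and of $g-x$ equal to zero. For surjectivity one unwinds the meaning of a coherent family $(\xi_n)_n\in\varprojlim\cR_n(\K)$: the coset $\xi_n$ pins down the coefficients $a_1,\dots,a_{n-1}$ of $h$ and $b_2,\dots,b_n$ of $g$, the relations $\pi_{mn}(\xi_n)=\xi_m$ force these to be independent of the index $n$, and the resulting sequences assemble into a genuine element $(h,g)\in\cR(\K)$ with $\Phi(h,g)=(\xi_n)_n$. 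This is really just the completeness of the coefficient-truncation topology on formal series; it is the step that needs the most care, but it is bookkeeping rather than a genuine obstacle. It also identifies the inverse limit topology on $\cR(\K)$ with the one having $\{\cR^n(\K)\}_n$ as a neighbourhood base at the identity.

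Finally, suppose $\K$ is finite. Since the successive filtration quotients $\cH^i(\K)/\cH^{i+1}(\K)$ and $\cN^i(\K)/\cN^{i+1}(\K)$ are each in bijection with $\K$, the semidirect structure gives $|\cR_n(\K)|=[\cH(\K):\cH^n(\K)]\cdot[\cN(\K):\cN^n(\K)]=|\K|^{2(n-1)}<\infty$; thus $\cR(\K)=\varprojlim\cR_n(\K)$ is an inverse limit of finite groups, hence profinite. If moreover $\K=\F_q$ with $q$ a power of $p$, then $|\cR_n(\K)|=q^{2(n-1)}$ is a power of $p$, so every $\cR_n(\K)$ is a finite $p$-group and $\cR(\K)$ is a pro-$p$-group.
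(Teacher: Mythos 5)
Your proposal is correct and follows essentially the same route as the paper, which derives the corollary from Lemma \ref{Rmn} together with the observation that each quotient $\cR_n(\K)=\cR(\K)/\cR^n(\K)$ is finite (and a $p$-group when $\K=\F_q$); the paper leaves the identification $\cR(\K)=\varprojlim\cR_n(\K)$ implicit, whereas you spell out the coherence/completeness bookkeeping and the order count $|\cR_n(\K)|=|\K|^{2(n-1)}$ explicitly.
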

    \begin{remark}
    We have many other representations of $\cR(\K)$ as inverse limits of topological groups, for example $\cR(\K)=\varprojlim \cP_n(\K)$, where $\cP_n(\K):=\cR(\K)/\cH(\K)\rtimes \cN^n(\K)$ for every $n\in \N$. However, we choose our representation $\cR(\K)=\varprojlim\cR_n(\K) $ and its induced inverse limit topology for the remaining of the paper.
    \end{remark}
    From now on, if there is no other statement we assume the action is induced from substitutions. We just denote the semi-direct products by $ \rtimes $ instead of $ \rtimes_\varphi $ or $ \rtimes_{\varphi_n|_{\cH^{m}(\K)}} $.

    For every element $h=\sum_{n=0}^\infty h_nx^n\in \K[[x]]$ and every $m\in \N_0$, we denote $[x^m]h:=h_m\in \K$.
    For each $(h,g)\in \cR(\K)$ we associate the matrix $A(h,g)\in M_{\N_0}(\K)$ with entries $a_{ij}\in \K$ defined by $a_{ij}:=[x^i]h(x)g^j(x)$ for every $i,j\in \N_0$. Then $A(h,g)$ is a lower triangle matrix, i.e. $a_{ij}=0$ for every $j>i$, and furthermore, $a_{ii}=1$ for all $i\in \N_0$. The map $A:\cR(\K)\to GL_{\N_0}(\K), (h,g)\mapsto A(h,g)$ is a homomorphism and indeed it is a monomorphism. We denote the map $ \tilde{\cR}(\K) $ as the image $ A(\cR(\K)) $ and denote $ \tilde{\cR}^n(\K) $ is the subset of $ \tilde{\cR}(\K) $ such that $ a_{ij}=0 $ for $ j<i<n $. We have the following proposition.
    \begin{proposition}
        For every $n\in \N$, $ \tilde{\cR}^n(\K) $ is a normal subgroup of $ \tilde{\cR}(\K) $ and is isomorphic to $ \cH^n(\K)\rtimes \cN^n(\K) $ by the restriction on $ A $. Then, naturally, $ \tilde{\cR}(\K)/\tilde{\cR}^n(\K) $ is isomorphic to $ \cR_n(\K) $.
    \end{proposition}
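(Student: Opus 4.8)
The plan is to transport everything along the monomorphism $A$. Since $\tilde{\cR}(\K)=A(\cR(\K))$ and $A$ is an injective homomorphism, $A\colon\cR(\K)\to\tilde{\cR}(\K)$ is an isomorphism of groups; once one identifies which subgroup of $\cR(\K)$ is carried onto $\tilde{\cR}^n(\K)$, everything else follows formally. So the one piece of genuine content is the set-theoretic identity
\[
A\big(\cH^n(\K)\rtimes\cN^n(\K)\big)=\tilde{\cR}^n(\K),
\]
equivalently: for $(h,g)\in\cR(\K)$ the Riordan matrix $A(h,g)=(a_{ij})$ satisfies $a_{ij}=0$ for all $i,j$ with $0<i-j<n$ if and only if $h\in\cH^n(\K)$ and $g\in\cN^n(\K)$.

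For the implication $(\Leftarrow)$, write $h=1+\mathscr{O}(x^n)$ and $g=x\big(1+\mathscr{O}(x^n)\big)$; expanding by the binomial theorem (valid over any commutative unital ring) gives $g(x)^j=x^j\big(1+\mathscr{O}(x^n)\big)^j=x^j+\mathscr{O}(x^{n+j})$, hence
\[
h(x)g(x)^j=\big(1+\mathscr{O}(x^n)\big)\big(x^j+\mathscr{O}(x^{n+j})\big)=x^j+\mathscr{O}(x^{n+j}),
\]
so that $a_{ij}=[x^i]h(x)g(x)^j=0$ whenever $j<i<j+n$. For $(\Rightarrow)$, suppose $A(h,g)\in\tilde{\cR}^n(\K)$. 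Reading off the first column, $a_{i0}=[x^i]h(x)$, so the conditions $a_{i0}=0$ for $0<i<n$ say precisely that $h\in\cH^n(\K)$; granting this, $h(x)g(x)=g(x)+\mathscr{O}(x^{n+1})$, hence $a_{i1}=[x^i]h(x)g(x)=[x^i]g(x)$ for all $i\le n$, and the conditions $a_{i1}=0$ for $2\le i\le n$ say precisely that $g\in\cN^n(\K)$. This establishes the identity above; since $A$ is an injective homomorphism, its restriction is an isomorphism of $\cH^n(\K)\rtimes\cN^n(\K)=\cR^n(\K)$ onto $\tilde{\cR}^n(\K)$, and in particular $\tilde{\cR}^n(\K)$ is a subgroup of $\tilde{\cR}(\K)$.

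Normality and the quotient are now immediate. By Lemma \ref{Rmn}(2), applied with $(m_1,n_1)=(n,n)$ and $(m_2,n_2)=(1,1)$ — so that $\cR^{1,1}(\K)=\cR(\K)$, $\cR^{n,n}(\K)=\cR^n(\K)$, and the hypotheses $m_2+n_1\ge m_1\ge m_2$, $n_1\ge n_2$ hold — the subgroup $\cR^n(\K)$ is normal in $\cR(\K)$; a group isomorphism sends normal subgroups to normal subgroups, so $\tilde{\cR}^n(\K)=A(\cR^n(\K))$ is normal in $A(\cR(\K))=\tilde{\cR}(\K)$. Finally $A$ descends to an isomorphism
\[
\tilde{\cR}(\K)/\tilde{\cR}^n(\K)=A(\cR(\K))/A(\cR^n(\K))\;\cong\;\cR(\K)/\cR^n(\K)=\cR_n(\K),
\]
which is the last claim. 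The point demanding care is the index bookkeeping in the displayed equivalence: the first column of a Riordan matrix detects $h$ only up to degree $n-1$, whereas — once $h\in\cH^n(\K)$ is known — the second column detects $g$ up to degree $n$, and it is exactly this asymmetry that makes the single band condition ``$a_{ij}=0$ for $0<i-j<n$'' equivalent to the pair ``$h\in\cH^n(\K)$ and $g\in\cN^n(\K)$'' rather than to an off-by-one variant, so one must be careful not to conflate $\cN^{n-1}(\K)$ with $\cN^n(\K)$.
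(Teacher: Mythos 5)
Your proof is correct and follows the same basic strategy as the paper's: show $A(\cH^n(\K)\rtimes\cN^n(\K))=\tilde{\cR}^n(\K)$ by reading off columns $0$ and $1$ of the matrix, then transport everything along the monomorphism $A$. There is, however, one substantive discrepancy you should make explicit. The paper defines $\tilde{\cR}^n(\K)$ by the condition $a_{ij}=0$ for $j<i<n$, whereas you work throughout with the band condition $a_{ij}=0$ for $0<i-j<n$. These are not equivalent, and the difference is exactly the off-by-one issue you flag at the end: with the paper's literal condition, column $1$ only forces $[x^i]g=0$ for $2\le i\le n-1$, which yields $g\in\cN^{n-1}(\K)$ rather than $g\in\cN^n(\K)$; in the extreme case $n=2$ the literal condition involves only the entry $a_{10}$ and places no constraint on $g$ whatsoever, so $\tilde{\cR}^2(\K)$ as literally defined would be $A(\cH^2(\K)\rtimes\cN(\K))$ and the proposition would fail. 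Your band condition is the one for which the stated isomorphism actually holds (and it is what the paper's own computation of $[A(h,g)]_{ij}$ really proves in the forward direction), so your argument is sound for the corrected definition; you should simply say that you are taking $a_{ij}=0$ for $0<i-j<n$ as the defining condition of $\tilde{\cR}^n(\K)$. Beyond this, your write-up is somewhat more complete than the paper's: the paper's proof establishes only the set equality and leaves the normality of $\tilde{\cR}^n(\K)$ and the quotient isomorphism implicit, whereas you derive them cleanly from Lemma \ref{Rmn}(2) (with the correct choice of parameters) together with the injectivity of $A$.
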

    \begin{proof}
        First, we show that $ A(\cH^n(\K)\rtimes \cN^n(\K))\subseteq\tilde{\cR}^n(\K) $. Indeed, let $ h=1+h_nx^n+\ldots\in \cH^n(\K) $ and $ g=x+g_nx^{n+1}+\ldots\in \cN^n(\K) $ then we see that \begin{align*}
            [A(h,g)]_{ij}=[x^i]h(x)g^j(x)=&[x^{i-j}](1+h_nx^n+\ldots)(1+g_nx^n+\ldots)^j\\
            =&[x^{i-j}](1+(h_n+jg_n)x^n+(h_{n+1}+\ldots)x^{n+1}+\ldots)\\
            =&0,
        \end{align*} for $ j<i<n $. On the other hand, let $ A(h,g)\in\tilde{\cR}^n(\K) $. We write $ h=1+h_1x+\ldots $ and $ g=x+g_1x^2+\ldots $ and see that $ 0=[A(h,g)]_{i0}=h_i $ for $ 0<i<n $ so $ h\in \cH^n(\K) $. We also see that $ 0=[A(h,g)]_{i1}=\sum_{k=0}^{i-1}h_kg_{i-1-k}(h_0=g_0=1) $ for $ 1<i<n $. Using induction on $ i $ we get that $ g\in \cN^n(\K) $. Hence, $ A(\cH^n(\K)\rtimes \cN^n(\K))=\tilde{\cR}^n(\K) $.
    \end{proof}
    As every Riordan matrix is a lower triangular matrix, for every $m\in \N$, we have a natural homomorphism $\pi_m:\tilde{\cR}(\K)\to GL_{m}(\K)$ defined by $\pi_{m}((d_{i,j})_{i,j\in \N}):=(d_{i,j})_{i,j=0,\dots, m-1}$ for every $(d_{i,j})_{i,j\in \N}\in \tilde{\cR}(\K)$. For every $m\in \N$, put $\tilde{\cR}_m:=\pi_m(\tilde{\cR}(\K))$ then we have $ \tilde{\cR}_m\cong\tilde{\cR}(\K)/\tilde{\cR}^m(\K) $ and the natural map $\varphi_{m+1}:\tilde{\cR}_{m+1}\to \tilde{\cR}_m$ given by $\varphi_{m+1}(( d_{i,j})_{i,j=0,\dots, m+1}):=(d_{i,j})_{i,j=0,\dots, m})$ is a homomorphism. Then the Riordan group is isomorphic to the inverse limit  group $\varprojlim\{(\tilde{\cR}_m,\varphi_m)_{m\in \N})\}$.

    \begin{lem}
        \label{L-Hm}
        We have that
        \begin{align*}
            \cH^m(\K)=\langle \varphi(g)(h)h^{-1},g\in \cN^{m-1}(\K),h\in \cH(\K) \rangle=\langle \varphi(g)(h)h^{-1},g\in \cN^{m-1}(\K),h=1+x \rangle,
        \end{align*} for any $ m\geq2 $.
    \end{lem}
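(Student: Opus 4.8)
The plan is to prove the two equalities in Lemma~\ref{L-Hm} by a degree-by-degree ``successive approximation'' argument, using the key computation in Lemma~\ref{L-simple computations}(3). Write $G_m$ for the group generated by the elements $\varphi(g)(h)h^{-1}=h(g)h^{-1}$ with $g\in\cN^{m-1}(\K)$ and $h\in\cH(\K)$, and $G'_m$ for the (a priori smaller) group where $h$ is restricted to $h=1+x$. The inclusions $G'_m\subseteq G_m\subseteq\cH^m(\K)$ are immediate: by Lemma~\ref{L-simple computations}(3), if $g=x+b_{m}x^{m}+\cdots\in\cN^{m-1}(\K)$ (so in the notation there the leading index is $m$ in $g$, and $h$ has leading term at degree $1$, i.e.\ $n=1$) then $h(g)h^{-1}=1+1\cdot a_1 b_{m}x^{m}+\cdots$ lies in $\cH^m(\K)$; since $\cH^m(\K)$ is a subgroup, $G_m\subseteq\cH^m(\K)$. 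The content of the lemma is the reverse inclusion $\cH^m(\K)\subseteq G'_m$.

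For that, I would argue as follows. Fix a target $f=1+\sum_{i\ge m}c_ix^i\in\cH^m(\K)$; I construct an element of $G'_m$ equal to $f$ by building it coefficient-by-coefficient. Take $h=1+x$, so $h^{-1}=1-x+x^2-\cdots$, and $g=x+b_m x^{m}+b_{m+1}x^{m+1}+\cdots\in\cN^{m-1}(\K)$ with coefficients to be chosen. By Lemma~\ref{L-simple computations}(3) (with $n=1$), the degree-$k$ coefficient of $h(g)h^{-1}$ is $0$ for $k<m$, is exactly $b_m$ for $k=m$ (here $na_n=1$), and for $k>m$ has the form $b_k + r_k$ where $r_k$ depends only on $b_m,\dots,b_{k-1}$. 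Hence the map sending $(b_m,b_{m+1},\dots)$ to the coefficient sequence of $h(g)h^{-1}$ is ``triangular with unit diagonal'': one can solve recursively, $b_m:=c_m$, then $b_{m+1}:=c_{m+1}-r_{m+1}$, and in general $b_k:=c_k-r_k$, to make $h(g)h^{-1}=f$ exactly. This shows $f\in G'_m$, hence $\cH^m(\K)\subseteq G'_m$, and combined with the trivial inclusions all three sets coincide.

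There is one subtlety to address: to invoke Lemma~\ref{L-simple computations}(3) I need $g\in\cN^{m-1}(\K)$, i.e.\ $g=x+(\text{terms of degree}\ge m)$; with the chosen $b_j$'s this holds by construction, and the recursion never forces any earlier coefficient to change, so $g$ is a well-defined element of $\cN^{m-1}(\K)$ (as a formal power series; no convergence issue arises since $\K[[x]]$ is complete in the $x$-adic topology and each coefficient is determined by finitely many steps). I should also note that a single generator $h(g)h^{-1}$ already realizes $f$, so the ``group generated by'' can in fact be replaced by the set of such elements — but stating it as a generated subgroup is harmless and matches how the lemma will be used.

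The main obstacle — really the only non-routine point — is getting the bookkeeping in Lemma~\ref{L-simple computations}(3) to line up: one must check that with $h=1+x$ the coefficient $na_n$ equals $1$ (so the diagonal of the triangular system is genuinely invertible over any commutative ring, not just a field), and that the ``error'' polynomial $r_k$ indeed involves only $b_m,\dots,b_{k-1}$ and not $b_k$ itself, so the recursion is well-posed. Both are exactly what part~(3) of the lemma supplies, so once that is cited carefully the argument closes.
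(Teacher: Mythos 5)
Your proposal is correct and follows essentially the same route as the paper: the easy inclusion via Lemma \ref{L-simple computations}(3), then the choice $h=1+x$ and the observation that the degree-$k$ coefficient of $h(g)h^{-1}$ is $b_k$ plus a polynomial in $b_m,\dots,b_{k-1}$, so the triangular system with unit diagonal can be solved recursively to hit any prescribed element of $\cH^m(\K)$ with a single generator. The only cosmetic difference is that the paper computes $h(g)h^{-1}$ explicitly for $h=1+x$ (getting $1+b_mx^m+\sum_{j\ge m+1}(\sum_i(-1)^ib_{j-i})x^j$) rather than citing Lemma \ref{L-simple computations}(3) again.
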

    \begin{proof}
        Let  $ h=1+a_1x+a_2x^2+\dots\in \cH(\K) $ and  $ g=x+b_mx^m+\dots\in \cN^{m-1}(\K) $.
        From Lemma \ref{L-simple computations} we get that $ \varphi(g)(h)h^{-1}=1+a_1b_mx^m+(a_1b_{m+1}+r_{m+1})x^{m+1}+\dots $, where $ r_n(n> m) $ is a polynomial which depends on $ a_1,\ldots,a_{n-1},b_m,\dots,b_{n-1} $, for every $n\geq m+1$. Hence

        $$\langle \varphi(g)(h)h^{-1},g\in \cN^{m-1}(\K),h\in \cH(\K) \rangle \subset \cH^{m}(\K).$$
        If $h=1+x$ then $h^{-1}=1+\sum_{i=1}^\infty (-1)^ix^i$. Therefore, for every $g=x+\sum_{j=m}^\infty b_jx^j\in \cN^{m-1}(\K)$, we have
        \begin{align*}
            h(g)h^{-1}&=[(1+x)+\sum_{j=m}^\infty b_jx^j](1+\sum_{i=1}^\infty (-1)^ix^i)\\
            &=1+b_mx^{m}+\sum_{j\geq m+1}(\sum_{i=0}^{j-1}(-1)^ib_{j-i})x^j.
        \end{align*}
        By the freedom of choice of $b_j\in \K$ for every $j\geq m$ we get that $$\cH^{m}(\K)\subset\langle \varphi(g)(h)h^{-1},g\in \cN^{m-1}(\K),h=1+x \rangle.$$
    \end{proof}

    Next we will present topological finite generating sets of $\cR(\F_p)$ and $\cR(\Z)$. Let us recall the definition of (topological) generating sets of a profinite group.
    \begin{definition}
        Let $S$ be a subset of a profinite group $G$. We call that $S$ \textit{(topologically) generates} $G$ or $S$ is a \textit{(topologically) generating} set of $G$ if the subgroup $\langle S\rangle$ of $G$ generated by $S$ is dense in $G$. A profinite group $G$ is called finitely generated if it has a finitely generating set $S$.
    \end{definition}

    \begin{proposition}
    \label{P-finitely generated}
        The Riordan groups $ \cR(\F_p) $  and $ \cR(\Z) $ are finitely generated, for every prime number $p\geq 2 $. Furthermore, for every  generating set $S_N$ of the Nottingham group $ \cN(\F_p)$ (respectively $\cN(\Z))$, the group $ \cR(\F_p) $ (respectively $ \cR(\Z) $) is generated by $$ \{(1+x,x), (1,g):g\in S_N\} .$$

    \end{proposition}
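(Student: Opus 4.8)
The plan is to exploit the semidirect decomposition $\cR(\K)=\cH(\K)\rtimes\cN(\K)$ and reduce the whole statement to Lemma~\ref{L-Hm}. Put $S=\{(1+x,x)\}\cup\{(1,g):g\in S_N\}$ and let $H=\overline{\langle S\rangle}$ be the closed subgroup it generates; the task is to show $H=\cR(\F_p)$ (respectively $\cR(\Z)$). The map $g\mapsto(1,g)$ is a topological embedding of $\cN(\K)$ onto the closed subgroup $\{1\}\times\cN(\K)$ of $\cR(\K)$, so the hypothesis that $S_N$ topologically generates $\cN(\F_p)$ (respectively $\cN(\Z)$) immediately gives $\{1\}\times\cN(\K)\subseteq H$. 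Finite generation will then follow by choosing $S_N$ finite, using that $\cN(\F_p)$ and $\cN(\Z)$ are topologically finitely generated by \cite{Camina} and \cite{BB08}.

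The crucial point is a commutator identity in $\cR(\K)$. Using $(h_1,g_1)(h_2,g_2)=(h_1h_2(g_1),g_1\circ g_2)$ one verifies directly that for $g\in\cN(\K)$ and $h\in\cH(\K)$,
\[
[(1,g),(h,x)]=\bigl(\varphi(g)(h)\,h^{-1},\,x\bigr),
\]
so this commutator lies in $\cH(\K)\times\{x\}$ and its $\cH$-coordinate is exactly the element $\varphi(g)(h)h^{-1}$ appearing in Lemma~\ref{L-Hm}. Taking $h=1+x$ and letting $g$ range over $\cN^{m-1}(\K)$ (all of whose images $(1,g)$ already lie in $H$), Lemma~\ref{L-Hm} identifies the subgroup generated by these commutators with $\cH^m(\K)\times\{x\}$; hence $\cH^m(\K)\times\{x\}\subseteq H$ for every $m\ge 2$, in particular $\cH^2(\K)\times\{x\}\subseteq H$.

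To bridge the gap between $\cH^2$ and $\cH^1=\cH$, I would use that every $h=1+a_1x+\cdots\in\cH(\K)$ factors as $h=(1+x)^{a_1}h'$ with $h'=(1+x)^{-a_1}h\in\cH^2(\K)$, where $a_1$ is read as an integer in $\{0,\dots,p-1\}$ when $\K=\F_p$ and as itself when $\K=\Z$; in either case $((1+x)^{a_1},x)=(1+x,x)^{a_1}\in H$. Together with $\cH^2(\K)\times\{x\}\subseteq H$ this gives $\cH(\K)\times\{x\}\subseteq H$, and since $(h,g)=(h,x)(1,g)$ and $\{1\}\times\cN(\K)\subseteq H$ we conclude $H=\cR(\K)$, as desired.

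Once Lemma~\ref{L-Hm} is in hand the argument is largely bookkeeping; the steps that need a little care are checking that $\{1\}\times\cN(\K)$ is a closed subgroup carrying its usual topology (so that topological generation of $\cN$ transfers to $\cR$), the commutator computation above, and the degree-one discrepancy handled by the factorization $h=(1+x)^{a_1}h'$. I do not expect a genuine obstacle beyond these routine verifications.
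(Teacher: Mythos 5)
Your proof is correct and follows essentially the same route as the paper: both rest on Lemma~\ref{L-Hm} to obtain $\cH^2(\K)\rtimes\{x\}$ from commutators of the elements $(1,g)$ with $(1+x,x)$, and then pass from $\cH^2(\K)$ to $\cH(\K)$ via powers of $1+x$ (your factorization $h=(1+x)^{a_1}h'$ is a slightly slicker version of the paper's induction on $a_1$). The only nit is that with the convention $[a,b]=a^{-1}b^{-1}ab$ the commutator works out to $\bigl((\varphi(g^{-1})(h)h^{-1})^{-1},x\bigr)$ rather than $(\varphi(g)(h)h^{-1},x)$, but since $\cH(\K)$ is abelian and $g\mapsto g^{-1}$ permutes $\cN^{m-1}(\K)$ this changes nothing.
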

    \begin{proof}
        We know that $\cN(\F_p)=\varprojlim \cN(\F_p)/\cN^n(\F_p)$, $\cN(\Z)=\varprojlim \cN(\Z)/\cN^n(\Z)$, and both $\cN(\F_p)$ and $\cN(\Z)$ are topologically finitely generated, see for example \cite[Theorem 4.5]{Babenko13} and \cite[Theorem 1.1]{BB08}. Let $ \K=\F_p $ or $ \Z $ and let $ S_N $ be a finite generator set of $ \cN(\K) $. From Lemma \ref{L-Hm}, the group $ \cH^2(\K)\rtimes\{x\} $ is generated by $ (1+x,x) $ and $ \{1\}\rtimes \cN(\K) $ so for every $m\geq 2$ the group $ \cH^2(\K)\rtimes \cN(\K)/\cR^m(\K) $ is generated by the left cosets of $\cR^m(\K)$: $ (1+x,x)\cR^m(\K) $ and $ \{1\}\rtimes \cN(\K)/\cR^m(\K) $. Therefore the group $ \cH^2(\K)\rtimes \cN(\K)/\cR^m(\K) $ is generated by $ (1+x,x)\cR^m(\K) $ and $ \{(1,s)\cR^m(\K)|s\in S_N\} $ for every $ m\geq 2 $.

        On the other hand, for every $a_1\in \K$, $ 1+a_1x $ and $ \cH^2(\K) $ generate the set $ \{1\pm a_1x+a_2x^2+a_3x^3+\ldots|a_i\in\K,i\geq2 \} $ because of the two following equations. The first one is \[ 1+a_1x+a_2x^2+a_3x^3+\ldots=(1+a_1x)(1+p_2x^2+p_3x^3\ldots), \] where $ p_2=a_2,p_{n+1}=a_{n+1}-a_1p_n $; and the second one is \begin{align*}
            1-a_1x=(1+a_1x+a_1^2x^2+a_1^3x^3+\ldots)^{-1}.
        \end{align*}

        Using induction, we claim that $ 1+x $ and $ \cH^2(\K) $ generate $ 1+a_1x $ for every $a_1\in \K$. In fact, we have the following equation:\[ 1+(a_1+1)x=(1+x)(1+a_1x-a_1x^2+a_1x^3-a_1x^4\ldots). \]
Hence if $\K=\F_p$ we get the claim. If $\K=\Z$ then we get that $ 1+x $ and $ \cH^2(\K) $ generate $ 1+a_1x $ for every $a_1\geq 0$. On the other hand, from above, we also have that $ 1+a_1x $ and $ \cH^2(\K) $ generate $1-a_1x$ for every $a_1\in \K$. Therefore, we also obtain the claim for the case $\K=\Z$.

        Therefore, $ 1+x $ and $ \cH^2(\K) $ generate the whole $ \cH(\K) $. This leads that $ (1+x,x)\cR^m(\K) $ and $ \cH^2(\K)\rtimes \cN(\K)/\cR^m(\K) $ generate the whole group $ \cR(\K)/\cR^m(\K) $ for every $ m\geq 2 $. Therefore, $ (1+x,x)\cR^m(\K) $ and $ \{(1,s)\cR^m(\K)|s\in S_N\} $ can generate the whole $ \cR(\K)/\cR^m(\K) $ for every $ m\geq 2 $. Therefore, we get that $ (1+x,x) $ and $ S_N $ topologically generate $ \cR(\K) $ when $ \K=\F_p $ or $ \K=\Z $.
    \end{proof}

    \subsection{Lower central series of $\cR(\K)$}

    Let $G$ be a group and let $G_1,G_2$ be subgroups of $G$. For elements $g$ and $h$ of $G$, the \textit{commutator} of $g$ and $h$ is $[g,h]:=ghg^{-1}h^{-1}$. The \textit{commutator subgroup} of $G_1$ and $G_2$ is the group generated by $$\{[g_1,g_2]:g_1\in G_1, g_2\in G_2\}.$$ We denote by $\gamma_2(G)=[G,G]$, the commutator subgroup of $G$. The \textit{lower central series} of $G$ is defined inductively as $\gamma_n(G):=[G,\gamma_{n-1}(G)]$ for every $n\geq 3$.

    In this subsection we investigate the lower central series of $\cR(\K)$.

    The following lemma is a slight generalization of \cite[Proposition 3.3]{GG} when the group $H$ is commutative.
    \begin{lem}
        \label{L-commutator subgroups of semi-direct products}
        Let $ G $ be a group and $ G_1,G_2 $ be normal subgroups of $G$. Let $ H $ be a commutative group and $ H_1,H_2 $ be normal subgroups of $H$. Suppose that there is an action $ \varphi:G\to\Aut(H) $ such that we have $ \varphi(g_i)|_{H_i}\in \Aut(H_i) $ for all $ i=1,2 $ and $ g_i\in G_i $. Let $L$ be the subgroup of $H$ generated by $$ \{\varphi(g_1)(h_2)h_2^{-1},\varphi(g_2)(h_1)h_1^{-1}:g_i\in G_i,h_i\in H_i, i=1,2 \}.$$ Then $\varphi(g)|_L\in \Aut(L)$ for every $g\in [G_1,G_2] $ and \begin{align}\label{semi direct prod and commutator}
            [H_1\rtimes_\varphi G_1,H_2\rtimes_\varphi G_2]=L\rtimes_\varphi[G_1,G_2].
        \end{align}
    \end{lem}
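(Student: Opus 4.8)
\emph{Overview.} The plan is to first prove the stronger statement that $L$ is invariant under the whole subgroup $\langle G_1,G_2\rangle$ of $G$ generated by $G_1\cup G_2$ (this already yields the assertion $\varphi(g)|_L\in\Aut(L)$ for $g\in[G_1,G_2]$, and makes $L\rtimes_\varphi[G_1,G_2]$ a genuine subgroup of $H\rtimes_\varphi G$), and then to compute commutators in $H\rtimes_\varphi G$ explicitly and read off both inclusions in \eqref{semi direct prod and commutator}.

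\emph{Invariance of $L$.} I would show $\varphi(g)(L)=L$ for every $g\in G_1$ and every $g\in G_2$. Since $\{g\in G:\varphi(g)(L)=L\}$ is a subgroup of $G$, this gives $\varphi(g)|_L\in\Aut(L)$ for all $g\in\langle G_1,G_2\rangle$, in particular for all $g\in[G_1,G_2]$. As $g^{-1}$ again lies in $G_1$ (resp. $G_2$), it suffices to prove $\varphi(g)(L)\subseteq L$, i.e. that $\varphi(g)$ carries each generator of $L$ into $L$. For a generator $\varphi(a)(h_2)h_2^{-1}$ with $a\in G_1$, $h_2\in H_2$ and $g\in G_1$, one has
$$\varphi(g)\big(\varphi(a)(h_2)h_2^{-1}\big)=\big(\varphi(ga)(h_2)h_2^{-1}\big)\big(\varphi(g)(h_2)h_2^{-1}\big)^{-1},$$
and both factors are generators of $L$ because $ga,g\in G_1$. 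For a generator $\varphi(c)(h_1)h_1^{-1}$ with $c\in G_2$, $h_1\in H_1$ and $g\in G_1$, I would use normality of $G_2$ in $G$ to write $gc=(gcg^{-1})g$ with $gcg^{-1}\in G_2$, and the hypothesis $\varphi(g)|_{H_1}\in\Aut(H_1)$ to put $k:=\varphi(g)(h_1)\in H_1$; then
$$\varphi(g)\big(\varphi(c)(h_1)h_1^{-1}\big)=\varphi\big(gcg^{-1}\big)(k)\,k^{-1},$$
again a generator of $L$. Interchanging the roles of the indices $1$ and $2$ handles $g\in G_2$.

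\emph{The commutator identity.} A direct computation in $H\rtimes_\varphi G$ using $(h,g)^{-1}=(\varphi(g^{-1})(h^{-1}),g^{-1})$ gives, for $(h_1,g_1)\in H_1\rtimes_\varphi G_1$ and $(h_2,g_2)\in H_2\rtimes_\varphi G_2$,
$$[(h_1,g_1),(h_2,g_2)]=\Big(\varphi(g_1^{-1})(h_1^{-1})\,\varphi(g_1^{-1}g_2^{-1})(h_2^{-1})\,\varphi(g_1^{-1}g_2^{-1})(h_1)\,\varphi(g_1^{-1}g_2^{-1}g_1)(h_2),\ [g_1,g_2]\Big).$$
Using that $H$ is commutative, I would regroup the four factors in the first coordinate (and use that $\varphi$ is a homomorphism) as
$$\varphi(g_1^{-1})\big(\varphi(g_2^{-1})(h_1)h_1^{-1}\big)\cdot\varphi(g_1^{-1}g_2^{-1})\big(\varphi(g_1)(h_2)h_2^{-1}\big).$$
Here $\varphi(g_2^{-1})(h_1)h_1^{-1}$ is a generator of $L$ (as $g_2^{-1}\in G_2$, $h_1\in H_1$) and $\varphi(g_1)(h_2)h_2^{-1}$ is a generator of $L$ (as $g_1\in G_1$, $h_2\in H_2$), so by the invariance step the first coordinate lies in $L$; hence every generating commutator of $[H_1\rtimes_\varphi G_1,H_2\rtimes_\varphi G_2]$ lies in $L\rtimes_\varphi[G_1,G_2]$, which proves the inclusion $\subseteq$ in \eqref{semi direct prod and commutator}. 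For the reverse inclusion, specializing the identity gives $[(h_1,1),(1,g_2)]=(\varphi(g_2^{-1})(h_1)h_1^{-1},1)$, $[(1,g_1),(h_2,1)]=\big((\varphi(g_1^{-1})(h_2)h_2^{-1})^{-1},1\big)$ and $[(1,g_1),(1,g_2)]=(1,[g_1,g_2])$; letting $g_i$ and $h_i$ range over $G_i$ and $H_i$, these commutators generate $L\rtimes_\varphi[G_1,G_2]$, so that subgroup is contained in $[H_1\rtimes_\varphi G_1,H_2\rtimes_\varphi G_2]$. Combining the two inclusions yields \eqref{semi direct prod and commutator}.

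\emph{Main obstacle.} I expect the only genuine subtlety to be the invariance step, specifically the "cross" generators $\varphi(c)(h_1)h_1^{-1}$ under $\varphi(g)$ with $g\in G_1$: the naive attempt to move $c\in G_2$ past $g$ via $gc=c(c^{-1}gc)$ produces terms of the form $\varphi(G_1)(H_1)$, which need not lie in $L$, so one must instead conjugate in such a way that the $G_2$-factor remains in $G_2$ — this is exactly where normality of $G_2$ in $G$ and the hypothesis on $\varphi(g)|_{H_1}$ are used. Once $L$ is known to be $\langle G_1,G_2\rangle$-invariant, everything else is routine commutator bookkeeping in a semidirect product, with commutativity of $H$ needed only for the regrouping of the first coordinate of the general commutator.
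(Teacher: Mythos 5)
Your proof is correct and follows essentially the same route as the paper's: establish $\varphi$-invariance of $L$ via the telescoping identity $\varphi(g)\big(\varphi(g')(h)h^{-1}\big)=\varphi(gg')(h)h^{-1}\cdot\big(\varphi(g)(h)h^{-1}\big)^{-1}$, then expand the commutator $[(h_1,g_1),(h_2,g_2)]$ explicitly to get both inclusions, with the reverse inclusion coming from the special commutators $[(1,g_1),(h_2,1)]$ and $[(h_1,1),(1,g_2)]$. The only real difference is that the paper writes the first coordinate of the general commutator directly as a product of three generators of $L$ (with the conjugates $g_1g_2g_1^{-1}$ and $[g_1,g_2]$ kept inside $G_2$ and $G_1$ by normality), so it only needs invariance of $L$ under $[G_1,G_2]$, whereas your regrouping applies $\varphi(g_1^{-1})$ and $\varphi(g_1^{-1}g_2^{-1})$ to generators and therefore requires — and you correctly prove — the stronger invariance under all of $\langle G_1,G_2\rangle$.
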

    \begin{proof}

        For any $ g,g'\in G $ and $ h\in H $ we have
        \begin{align*}
            \varphi(g)(\varphi(g')(h)h^{-1})=\varphi(gg')(h)h^{-1}\cdot (\varphi(g)(h)h^{-1})^{-1}.
        \end{align*}
        As $G_1$ is a normal subgroup of $G$ we get that $gg'\in G_1$ for every $g\in [G_1,G_2],g'\in G_1 $. Therefore, $\varphi(g)(\varphi(g')(h)h^{-1})=\varphi(gg')(h)h^{-1}\cdot (\varphi(g)(h)h^{-1})^{-1}\in L$ for every $g\in [G_1,G_2],g'\in G_1,h\in H_2 $.   Similarly, $\varphi(g)(\varphi(g')(h)h^{-1})\in L$ for every $g\in [G_1,G_2],g'\in G_2,h\in H_1 $. Hence $\varphi(g)(L)\subset  L$ for every $g\in [G_1,G_2] $.

        Let $g\in [G_1,G_2]$. It is clear that $\varphi(g)$ is injective. The surjectivity of $\varphi(g)|_L$, follows from the following equation.
        \begin{align*}
            \varphi(g)(\varphi(g^{-1}g')(h)h^{-1}\cdot (\varphi(g^{-1})(h^{-1})h))=\varphi(g')(h)h^{-1}, \mbox{ for every } g'\in G,h,h'\in H.
        \end{align*}
        Hence $\varphi(g)|_L\in \Aut(L)$ for every $g\in [G_1,G_2] $.

        Now we are ready to prove \eqref{semi direct prod and commutator}. For every $(h_i,g_i)\in H_i\rtimes_\varphi G_i $ we have
        \begin{align*}
            [(h_1,g_1),(h_2,g_2)]=(\varphi(g_1)(h_2)h_2^{-1}\cdot\varphi(g_1g_2g_1^{-1})(h_1^{-1})h_1\cdot \varphi([g_1,g_2])(h_2^{-1})h_2,[g_1,g_2]).
        \end{align*}
        Therefore $[H_1\rtimes_\varphi G_1,H_2\rtimes_\varphi G_2]\subset L\rtimes_\varphi[G_1,G_2]$.
        On the other hand for every $g_i\in G_i, h_i\in H_i, i=1,2$, we have \begin{align*}
            (\varphi(g_1)(h_2)h_2^{-1},1)=[(1,g_1),(h_2,1)] \text{ and }(\varphi(g_2)(h_1)h_1^{-1},1)=[(h_1,1),(1,g_2)]^{-1}.
        \end{align*}
        Hence, $L\rtimes_\varphi[G_1,G_2]\subset [H_1\rtimes_\varphi G_1,H_2\rtimes_\varphi G_2]$.
    \end{proof}

    Now we are ready to calculate the lower central series of the Riordan group $\cR(\K)$.

    \begin{lem} Let $ \F_q $ be finite field with characteristic $ p>2. $ We have for $ n\geq 2 $,
        \begin{align}
            \label{F-lower central series}
            \gamma_n(\cR(\F_q))=\cH^{n+[(n-2)/(p-1)]}(\F_q)\rtimes \cN^{n+1+[(n-2)/(p-1)]}(\F_q).
        \end{align}
    \end{lem}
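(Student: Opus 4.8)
The plan is to establish \eqref{F-lower central series} by induction on $n$, handling $n=2$ directly and reducing the step from $n-1$ to $n$ to a single application of Lemma~\ref{L-commutator subgroups of semi-direct products}, fed by Lemmas~\ref{L-simple computations} and~\ref{L-Hm}. Write $\cH^k=\cH^k(\F_q)$, $\cN^k=\cN^k(\F_q)$ and, for $k\ge 2$, put $e(k)=k+[(k-2)/(p-1)]$, so the claim reads $\gamma_k(\cR(\F_q))=\cH^{e(k)}\rtimes\cN^{e(k)+1}$. For $n=2$ one writes $\gamma_2(\cR(\F_q))=[\cH^1\rtimes\cN^1,\cH^1\rtimes\cN^1]$ and applies Lemma~\ref{L-commutator subgroups of semi-direct products} with $G_1=G_2=\cN(\F_q)$ and $H_1=H_2=\cH(\F_q)$ (note $\cH(\F_q)$ is commutative): the $\cN$-factor is $[\cN(\F_q),\cN(\F_q)]=\cN^3$ (for $p>2$ the leading commutator coefficient of two series of $\cN^1$ vanishes while the next layer is attained), and the $\cH$-factor is $\langle\varphi(g)(h)h^{-1}:g\in\cN^1,\,h\in\cH^1\rangle=\cH^2$ by Lemma~\ref{L-Hm} with $m=2$. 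Since $e(2)=2$, this is the claimed $\cH^2\rtimes\cN^3$.

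For the inductive step, assume \eqref{F-lower central series} for $n-1\ge 2$, so $\gamma_{n-1}(\cR(\F_q))=\cH^a\rtimes\cN^{a+1}$ with $a=e(n-1)\ge 2$. Since $\cN^{a+1}$ is normal in $\cN(\F_q)$ and $\varphi(g)|_{\cH^a}\in\Aut(\cH^a)$ for every $g\in\cN^{a+1}$ by Lemma~\ref{Rmn}(1), Lemma~\ref{L-commutator subgroups of semi-direct products} applies with $G_1=\cN^1$, $G_2=\cN^{a+1}$, $H_1=\cH^1$, $H_2=\cH^a$ and gives
\begin{align*}
\gamma_n(\cR(\F_q))=\big[\cH^1\rtimes\cN^1,\ \cH^a\rtimes\cN^{a+1}\big]=L\rtimes[\cN^1,\cN^{a+1}],
\end{align*}
where $L=\big\langle\varphi(g_1)(h_2)h_2^{-1},\ \varphi(g_2)(h_1)h_1^{-1}:g_1\in\cN^1,\,h_2\in\cH^a,\,g_2\in\cN^{a+1},\,h_1\in\cH^1\big\rangle$. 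One is thus reduced to identifying $L$ and $[\cN^1,\cN^{a+1}]$, and the point is that both depend on the residue of $a$ modulo $p$.

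For the $\cN$-factor, the commutator of an element of $\cN^1$ with an element of $\cN^{a+1}$ has, in degree $a+3$, a leading coefficient that is a nonzero multiple of $a$, so the usual Nottingham-group commutator calculus gives $[\cN^1,\cN^{a+1}]=\cN^{a+2}$ if $p\nmid a$ and $[\cN^1,\cN^{a+1}]=\cN^{a+3}$ if $p\mid a$. For the $\cH$-factor I would prove $\cH^{a+2}\subseteq L\subseteq\cH^{a+1}$: the inclusion $\cH^{a+2}\subseteq L$ holds because the second type of generator $\varphi(g_2)(h_1)h_1^{-1}$ with $g_2\in\cN^{a+1}$, $h_1\in\cH^1$ already generates $\cH^{a+2}$, by Lemma~\ref{L-Hm} with $m=a+2$; and Lemma~\ref{L-simple computations}(3), applied with $h_2=1+\alpha x^a+\cdots\in\cH^a$ and $g_1=x+\beta x^2+\cdots\in\cN^1$, shows $\varphi(g_1)(h_2)h_2^{-1}\in\cH^{a+1}$ with degree-$(a+1)$ coefficient equal to the single monomial $a\alpha\beta$, whence $L\subseteq\cH^{a+1}$. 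If $p\nmid a$, then taking $g_1=x+x^2$ and letting $\alpha$ range over $\F_q$ the coefficients $a\alpha$ exhaust $\F_q$, so with $\cH^{a+2}\subseteq L$ one gets $L=\cH^{a+1}$; if $p\mid a$, the coefficient $a\alpha\beta$ vanishes identically, every first-type generator lies in $\cH^{a+2}$, and $L=\cH^{a+2}$. Finally, to match the two cases with the floor function: writing $n-3=(p-1)q+r$ with $0\le r\le p-2$ one computes $a=e(n-1)=pq+r+2\equiv r+2\pmod p$, so $p\mid a$ exactly when $r=p-2$, i.e.\ exactly when $(p-1)\mid(n-2)$, which is precisely the case in which $[(n-2)/(p-1)]=[(n-3)/(p-1)]+1$; a short computation then shows $L=\cH^{e(n)}$ and $[\cN^1,\cN^{a+1}]=\cN^{e(n)+1}$ in both cases, completing the induction.

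The main difficulties I anticipate are: (i) verifying cleanly that in the borderline case $p\mid a$ nothing unexpected survives in degree $a+1$ of $\varphi(g_1)(h_2)h_2^{-1}$ — this is exactly why one invokes part (3)(ii) of Lemma~\ref{L-simple computations}, which pins the degree-$(m+n-1)$ coefficient to the exact monomial $na_nb_m$, rather than the generic part (3)(iii); (ii) the matching one-step jump of $[\cN^1,\cN^{a+1}]$ when $p\mid a$, whose exactness relies on the Nottingham-group commutator calculus and should be checked with care for small primes, in particular $p=3$; and (iii) the elementary bookkeeping identity $p\mid a\iff(p-1)\mid(n-2)$, which is what channels the two separate ``jump'' conditions into the single floor function appearing in \eqref{F-lower central series}.
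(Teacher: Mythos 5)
Your proposal is correct and follows essentially the same route as the paper: induction with base case $n=2$, the semi-direct product commutator lemma to reduce to identifying $L$ and the Nottingham factor, the sandwich $\cH^{a+2}\subseteq L\subseteq \cH^{a+1}$ via Lemma~\ref{L-Hm} and Lemma~\ref{L-simple computations}(3), and the case split on $p\mid a$ matched to the floor function. The paper handles the Nottingham factor $[\cN^1,\cN^{a+1}]$ by citing the known lower central series of $\cN(\F_q)$ rather than redoing the commutator calculus, but otherwise the arguments coincide.
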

    \begin{proof} For convenience, we denote $ n+[(n-2)/(p-1)] $ just by $ \tau_n $.

        Applying Lemmas \ref{L-commutator subgroups of semi-direct products}, \ref{L-Hm} for $ m=2 $, and \cite[Remark 1 ii)]{Camina} (or \cite[Proposition 12.4.24]{LM}), formula \eqref{F-lower central series} is true for the case $ n=2.$

        Suppose that formula \eqref{F-lower central series} is true for $n\geq 2, $ applying Lemma \ref{L-commutator subgroups of semi-direct products} and \cite[Remark 1 ii)]{Camina} (or \cite[Proposition 12.4.24]{LM}) we get \begin{align*}
            \gamma_{n+1}(\cR(\F_q))=&[\cH(\F_q)\rtimes \cN(\F_q),\cH^{\tau_n}(\F_q)\rtimes \cN^{\tau_n+1}(\F_q)]\\
            =&L\rtimes \cN^{\tau_{n+1}+1},
        \end{align*}
        where $ L $ is the subgroup of $ \cH(\F_q) $ generated by $$\{ h'(g)h'^{-1},h(g')h^{-1}:g\in \cN(\F_q),g'\in \cN^{\tau_n+1}(\F_q),h\in \cH(\F_q),h'\in \cH^{\tau_n}(\F_q)\}.$$

        We need to check that $ L=\cH^{\tau_{n+1}}(\F_q)$. Let $g\in \cN(\F_q),g'\in \cN^{\tau_n+1}(\F_q),h\in \cH(\F_q),h'\in \cH^{\tau_n}(\F_q)$ then write \begin{align*}
            h'=&1+a_{\tau_n}x^{\tau_n}+a_{\tau_n+1}x^{\tau_n+1}+\ldots,\\
            g=&x+b_2x^2+b_3x^3+\ldots.\\
        \end{align*}
        Applying Lemma \ref{L-simple computations}, the smallest positive degree for $ h'(g)h'^{-1} $ is $ \tau_n+1 $ and the coefficient of $ \tau_n+i-1 $-degree($ i\geq2 $) term is of the form \[ \tau_na_{\tau_n}b_i+r_{\tau_n+i-1}, \] where $ r_{\tau_n+i-1} $ is a polynomial depends on $ b_2,\ldots,b_{i-1},a_{\tau_n},\ldots,a_{\tau_n+i-1} $. Applying Lemma \ref{L-Hm} for $ m=\tau_n+2 $, $ \cH^{\tau_n+2}(\F_q) =\langle h(g')h^{-1}\rangle $ so $ \cH^{\tau_n+2}(\F_q)\subseteq L\subseteq \cH^{\tau_n+1}(\F_q) $.

        If $ \tau_n $ is not divisible by $p$ then $ \tau_n+1=\tau_{n+1} $, because $ [(n-1)/(p-1)]=[(n-2)/(p-1)] $. Also, because $ \tau_n $ is invertible in $ \F_q $, by choosing $ a'_{\tau_n}=1 $ we are done.

        If $ \tau_n $ is divisible by $p$ then $ \tau_na_{\tau_n}b_i=0 $ in $ \F_q $ so by Lemma \ref{L-simple computations} the coefficient of $ \tau_n+1 $-degree is zero. The smallest positive degree of $ h'(g)h'^{-1} $ is $ \tau_n+2=\tau_{n+1}. $ Hence, $ L= \cH^{\tau_n+2}(\F_q)=\cH^{\tau_{n+1}}(\F_q). $
    \end{proof}
    \begin{definition}
    (\cite{KLP} or \cite[Definition 12.1.5]{LM})
        Let $G$ be an infinite pro-$p$-group. We say that $G$ has finite width if
        $$\sup_{n}\log_p(|\gamma_n(G)/\gamma_{n+1}(G)|)<\infty.$$
    \end{definition}
    The concept of coclass is very useful in approaching a classification of finite p-groups. The notion finite width arises naturally as a generalization of coclass to classify pro-$p$-groups \cite{KLP}, \cite[Chapter 12]{LM}.
    \begin{cor}
    \label{C-finite width}
        Let $\F_q$ be a finite field with characteristic $p$. Then $\cR(\F_q)$ is a pro-$p$-group with finite width.
    \end{cor}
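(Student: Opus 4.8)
The plan is to verify the two clauses of the definition of finite width for $G=\cR(\F_q)$: that $G$ is an infinite pro-$p$-group, and that $\sup_n\log_p|\gamma_n(G)/\gamma_{n+1}(G)|<\infty$. The first clause is free: $\cR(\F_q)$ is a pro-$p$-group by Corollary \ref{C-profinite}, and it is infinite because it surjects onto the finite groups $\cR_n(\F_q)=\cR(\F_q)/\cR^n(\F_q)$, whose orders $q^{2(n-1)}$ are unbounded. So everything reduces to a uniform bound on the orders of the successive quotients of the lower central series.

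The elementary fact I would invoke is that for every $m\in\N$ the assignments $1+a_mx^m+\cdots\mapsto a_m$ and $x+a_{m+1}x^{m+1}+\cdots\mapsto a_{m+1}$ are surjective homomorphisms $\cH^m(\F_q)\to(\F_q,+)$ and $\cN^m(\F_q)\to(\F_q,+)$ with kernels $\cH^{m+1}(\F_q)$ and $\cN^{m+1}(\F_q)$, so that $|\cH^m(\F_q)/\cH^{m'}(\F_q)|=q^{m'-m}=|\cN^m(\F_q)/\cN^{m'}(\F_q)|$ whenever $m\leq m'$. When $p>2$ I would feed formula \eqref{F-lower central series} into this: writing $\tau_n=n+[(n-2)/(p-1)]$, for every $n\geq2$,
\begin{align*}
|\gamma_n(\cR(\F_q))/\gamma_{n+1}(\cR(\F_q))|&=|\cH^{\tau_n}(\F_q)/\cH^{\tau_{n+1}}(\F_q)|\cdot|\cN^{\tau_n+1}(\F_q)/\cN^{\tau_{n+1}+1}(\F_q)|\\&=q^{\tau_{n+1}-\tau_n}\cdot q^{\tau_{n+1}-\tau_n}=q^{2(\tau_{n+1}-\tau_n)}.
\end{align*}
Since $[(n-1)/(p-1)]-[(n-2)/(p-1)]\in\{0,1\}$, one has $\tau_{n+1}-\tau_n\in\{1,2\}$, hence $|\gamma_n/\gamma_{n+1}|\leq q^4$ for all $n\geq2$; the single remaining index $n=1$ is immediate since $\gamma_2(\cR(\F_q))=\cH^2(\F_q)\rtimes\cN^3(\F_q)$ gives $|\cR(\F_q)/\gamma_2(\cR(\F_q))|=q^3$. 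Thus $\sup_n\log_p|\gamma_n(\cR(\F_q))/\gamma_{n+1}(\cR(\F_q))|\leq4\log_pq<\infty$, and $\cR(\F_q)$ has finite width when $p>2$.

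The case $p=2$ is not covered by formula \eqref{F-lower central series} and is where I expect the real work to be. Here I would rerun the induction that proved that formula: Lemma \ref{L-commutator subgroups of semi-direct products} to write $[\cH(\F_q)\rtimes\cN(\F_q),\cH^k(\F_q)\rtimes\cN^{k+1}(\F_q)]=L\rtimes[\cN(\F_q),\cN^{k+1}(\F_q)]$, Lemma \ref{L-Hm} together with Lemma \ref{L-simple computations} to trap $L$ between $\cH^{k+2}(\F_q)$ and $\cH^{k+1}(\F_q)$, and the (more delicate) known description of the lower central series of the Nottingham group $\cN(\F_2)$, which has finite width for every $p$ by \cite{LM}, to control the $\cN$-component. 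The expected output is two-sided inclusions of the shape $\cH^{a_n}(\F_q)\rtimes\cN^{b_n}(\F_q)\subseteq\gamma_n(\cR(\F_q))\subseteq\cH^{a_n-c}(\F_q)\rtimes\cN^{b_n-c}(\F_q)$ with $a_n,b_n$ growing linearly in $n$ and $c$ an absolute constant; granting these, the section-order computation above carries over with the constant $4$ replaced by a larger absolute constant, and finite width follows. The main obstacle is exactly this characteristic-$2$ bookkeeping: the coefficient $na_nb_i$ in Lemma \ref{L-simple computations} vanishes whenever $n$ is even, so the jumps of the $x$-valuation under commutation are harder to track than in the odd case already treated, and one must combine this with the known irregularities of $\gamma_n(\cN(\F_2))$ to extract clean linear growth with bounded gaps.
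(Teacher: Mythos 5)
For $p>2$ your argument coincides with the paper's: the proof of Corollary \ref{C-finite width} is exactly the observation that $\tau_{n+1}-\tau_n\in\{1,2\}$ in formula \eqref{F-lower central series}, which gives $|\gamma_n(\cR(\F_q))/\gamma_{n+1}(\cR(\F_q))|=q^{2(\tau_{n+1}-\tau_n)}\leq q^4$; your computation of the section orders via $|\cH^m/\cH^{m'}|=|\cN^m/\cN^{m'}|=q^{m'-m}$ is just a more explicit rendering of the same step. Where you go beyond the paper is in flagging the case $p=2$: you are right that formula \eqref{F-lower central series} is only stated and proved under the hypothesis $p>2$, yet both Corollary \ref{C-finite width} and Theorem \ref{T-main1}(4) are asserted for all characteristics, and the paper's proof of the corollary silently invokes the $p>2$ formula without comment. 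So the gap you identify is a gap in the paper itself, not merely in your write-up. Your proposed repair for $p=2$ (two-sided inclusions $\cH^{a_n}\rtimes\cN^{b_n}\subseteq\gamma_n\subseteq\cH^{a_n-c}\rtimes\cN^{b_n-c}$ obtained by rerunning the induction with the known lower central series of $\cN(\F_2)$) is a plausible strategy, but as you acknowledge it remains a sketch: the vanishing of the leading commutator coefficient $na_nb_i$ for all even $n$ in characteristic $2$ means the bounded-gap claim for the $\cH$-component still has to be verified, and neither you nor the paper has done so. In short: your proof is complete and matches the paper for $p>2$, and for $p=2$ you have correctly diagnosed an unaddressed case rather than introduced a new error.
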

    \begin{proof}
        We see that when we increase $ n $ by $ 1 $ then the smallest positive degree in $ \cH^{n+[(n-2)/(p-1)]} $ and the smallest degree which is greater than $ 1 $ in $ \cN^{n+1+[(n-2)/(p-1)]} $ are increasing by $ 1 $ or $ 2. $ So there are maximum $ q^4 $ elements that are in $ \gamma_n(\cR(\F_q)) $ but not in $ \gamma_{n+1}(\cR(\F_q)) $. Hence, $ |\gamma_n(\cR(\F_q))/\gamma_{n+1}(\cR(\F_q))|\leq q^4 $, this means $ \cR(\F_q) $ has finite width.
    \end{proof}
\begin{proof}[Proof of Theorem \ref{T-main1}]
Theorem \ref{T-main1} follows from Corollaries \ref{C-profinite}, \ref{C-finite width} and Proposition \ref{P-finitely generated}.
\end{proof}



    \section{index-subgroups and Hausdorff dimensions}

    We begin this section by recalling the definitions of Hausdorff dimension, and lower and upper box dimensions in metric spaces. For more details, see \cite{Falconer}.

    Let $(X,d)$ be a metric space, $S$ be a subset of $X$, and $\varepsilon,\alpha>0$. We define
    $$H^\alpha_\varepsilon(S):=\inf\sum_{k} (\diam S_k)^\alpha,$$
    where the infimum is taken over all covers $\{S_k\}_{k\in \N}$ of $S$ with $\diam S_k\leq \varepsilon$ for every $k\in \N$. Here $\diam S_k:=\sup_{x,y\in S_k}d(x,y)$, the diameter of $S_k$. For every $\alpha>0$, $H^\alpha_\varepsilon(S)$ is non-increasing with $\varepsilon$ and hence the limit
    $$H^\alpha(S):=\lim_{\varepsilon\to 0}H^\alpha_\varepsilon(S)$$
    exists.

    The \textit{Hausdorff dimension} of $S$ is defined as follows.
    $$\dim_H(S):=\sup\{\alpha>0:H^\alpha(S)=\infty\}=\inf\{\alpha>0:H^\alpha(S)=0\}.$$
    It is clear that if $S\subset S'$ then $\dim_H(S)\leq \dim_H(S')$.

    Now we assume further that the metric space $(X,d)$ is compact. As $X$ is compact, for every $\varepsilon>0$ and for every cover $\{B(x,\varepsilon)\}_{x\in I}$ of $S$ we can choose a finite subcover. Here, $B(x,\varepsilon):=\{y\in X:d(y,x)<\varepsilon\}$ is the open ball in $X$ with the center at $x$ and the radius $\varepsilon$. We define $N_\varepsilon(X)$ the minimal number of open balls of radius $\varepsilon$ required to cover $S$. The \textit{lower and upper box dimensions} of $S$ are defined as follows.
    $$\underline{\dim}_B(S):=\liminf_{\varepsilon\to 0}\frac{\log N_\varepsilon(S)}{-\log \varepsilon}\mbox{ and   } \overline{\dim}_B(S):=\limsup_{\varepsilon\to 0}\frac{\log N_\varepsilon(S)}{-\log \varepsilon}.$$
    For every bounded subset $S$ of a metric space $X$, we always have $\dim_H(S)\leq \underline{\dim}_B(S)$ \cite[Proposition 3.4]{Falconer}. Note that although \cite[Proposition 3.4]{Falconer} only stated the result for bounded subsets of $X=\R^n$, it still holds for general metric spaces.

    Now let $G$ be a profinite group. A \textit{filtration} is a descending chain of open normal subgroups $G_1=G\unrhd G_2\unrhd\cdots $
    which form a base for the neighborhoods of the identity of $G$. If $\{G_n\}$ is a filtration of $G$ then  $\bigcap_{n\in\N} G_n=\{1_G\}$.

    Given a filtration $\{G_n\}$ of $G$, we define an invariant metric $d$ on $G$ by
    $$d(g,h):=\inf\bigg\{\frac{1}{|G:G_n|}:gh^{-1}\in G_n\bigg\}.$$
    Then every set of diameter $\frac{1}{|G:G_n|}$ is contained in some coset of $G_n$. Let $H$ be a subgroup of $G$. If $\rho=\frac{1}{2|G:G_n|}$ then $N_\rho(H)=|HG_n:G_n|=|H:H\cap G_n|$ and hence
    $$\underline{\dim}_B(H)=\liminf_{n\to \infty}\frac{\log |HG_n/G_n|}{\log |G/G_n|} \mbox{ and   } \overline{\dim}_B(H)=\limsup_{n\to \infty}\frac{\log |HG_n/G_n|}{\log |G/G_n|}.$$

    In \cite[Proposition 2.6]{Ab}, Abercrombie showed that $\dim_H(H)\geq \liminf_{n\to \infty}\frac{\log |HG_n/G_n|}{\log |G/G_n|}$ for every closed subgroup $H$ of $G$. Therefore, for every closed subgroup $H$ of $G$ and every number $p\geq 2$, we have that \cite[Theorem 2.4]{BS}
    $$\dim_H(H)=\underline{\dim}_B(H)=\liminf_{n\to \infty}\frac{\log |HG_n/G_n|}{\log |G/G_n|}=\liminf_{n\to \infty}\frac{\log_p |HG_n/G_n|}{\log_p |G/G_n|}.$$
    The Hausdorff dimension does depend on the choice of the filtration $\{G_n\}$ \cite[Example 2.5]{BS}.

    The \textit{Hausdorff spectrum} of the profinite group $G$ is the set
    $$\hspec(G):=\{\dim_H(H):H \mbox{ is a closed subgroup of } G\}.$$
    The study of Hausdorff spectrum of profinite groups has been initiated by Barnea and Shalev \cite{BS}.
    In \cite[Theorem 1.6]{BS} we know that for Nottingham group $\cN(\F_p)$ with prime $p\geq 5$ it holds that for every closed subgroup $H$ of $\cN(\F_p)$, $\dim_H(H)$ does not depend on the choice of the filtration and
    $$\{0\}\bigcup\bigg\{\frac{1}{n}:n\in\N\bigg\}\subset \rm{hspec}(\cN(\F_p))\subset [0,\frac{3}{p}]\bigcup\bigg\{\frac{1}{n}:n\in \N\bigg\}.$$
    Furthermore, later via introducing index-subgroups of $\cN(\F_p)$, Barnea and Klopsch showed that \cite[Theorem 1.8]{Index} for every $p>2$,
    $$\bigg\{\frac{1}{n}:n\in\N\bigg\}\cup [0,\frac{1}{p}]\cup\bigg\{\frac{1}{p}+\frac{1}{p^r}:r\in \N\bigg\}\subset \rm{hspec}(\cN(\F_p)).$$

    Inspired by \cite{Index}, in this section we will introduce index-subgroups of $\cR(\F_p)$ and study $\hspec(\cR(\F_p))$. We start with constructions of some filtrations of $\cR(\K)$.
    \begin{lem}\label{Gsigma}
        Let $\K$ be a unital commutative ring. Let $ \sigma:\N\to\N $ be a function such that $ \sigma(1)=1,\sigma(m+n)\leq \sigma(m)+\sigma(n) $, non-decreasing and $ \lim_{n\to\infty}\sigma(n)=\infty $. Then $ G_n^{\sigma}=\cH^{\sigma(n)}(\K)\rtimes \cN^n(\K) $ is a filtration of $\cR(\K)$ satisfying that $[G^\sigma_i,G^\sigma_j]\subset G^\sigma_{i+j}$ for every $i,j\in \N$.
    \end{lem}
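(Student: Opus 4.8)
The plan is to check the filtration axioms for $\{G_n^\sigma\}$ one at a time and then to deduce the commutator inclusion from Lemma \ref{L-commutator subgroups of semi-direct products}. The single numerical fact that drives everything is that $\sigma(1)=1$ together with subadditivity forces $\sigma(n)=\sigma(1+\cdots+1)\le n\,\sigma(1)=n$ for all $n$, while monotonicity gives $\sigma(n)\ge\sigma(1)=1$; thus $1\le\sigma(n)\le n$ for every $n\in\N$.

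With this in hand, $G_n^\sigma=\cR^{\sigma(n),n}(\K)$ is a well-defined normal subgroup of $\cR(\K)=\cR^{1,1}(\K)$ by Lemma \ref{Rmn}, since $1\le\sigma(n)\le n\le n+1$ and $n\ge 1$ place the pair $(\sigma(n),n)$ in the admissible range of Lemma \ref{Rmn}(2). Next, $\sigma(1)=1$ gives $G_1^\sigma=\cH(\K)\rtimes\cN(\K)=\cR(\K)$, and monotonicity of $\sigma$ gives $\cH^{\sigma(n+1)}(\K)\subseteq\cH^{\sigma(n)}(\K)$ and $\cN^{n+1}(\K)\subseteq\cN^n(\K)$, so the $G_n^\sigma$ form a descending chain. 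For the topological requirement, $\sigma(n)\le n$ yields $\cR^n(\K)\subseteq G_n^\sigma\subseteq\cR^{\sigma(n)}(\K)$: the left inclusion exhibits $G_n^\sigma$ as a union of cosets of the open subgroup $\cR^n(\K)$, hence $G_n^\sigma$ is open; and since $\sigma(n)\to\infty$, for every $m$ there is $n$ with $\sigma(n)\ge m$ (and automatically $n\ge m$), so $G_n^\sigma\subseteq\cH^m(\K)\rtimes\cN^m(\K)=\cR^m(\K)$, which shows $\{G_n^\sigma\}$ refines the standard base $\{\cR^m(\K)\}$ and is therefore itself a base of neighborhoods of the identity.

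For the commutator inclusion $[G_i^\sigma,G_j^\sigma]\subseteq G_{i+j}^\sigma$ I would apply Lemma \ref{L-commutator subgroups of semi-direct products} with $G=\cN(\K)$, $G_1=\cN^i(\K)$, $G_2=\cN^j(\K)$ (normal in $\cN(\K)$), $H=\cH(\K)$, $H_1=\cH^{\sigma(i)}(\K)$, $H_2=\cH^{\sigma(j)}(\K)$, and $\varphi$ the substitution action; the needed compatibility $\varphi(g)|_{H_i}\in\Aut(H_i)$ for $g\in G_i$ is exactly Lemma \ref{Rmn}(1). The lemma then gives $[G_i^\sigma,G_j^\sigma]=L\rtimes[\cN^i(\K),\cN^j(\K)]$, where $L\le\cH(\K)$ is generated by the elements $\varphi(g_1)(h_2)h_2^{-1}$ ($g_1\in\cN^i(\K)$, $h_2\in\cH^{\sigma(j)}(\K)$) and $\varphi(g_2)(h_1)h_1^{-1}$ ($g_2\in\cN^j(\K)$, $h_1\in\cH^{\sigma(i)}(\K)$). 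For the Nottingham factor I invoke the standard inclusion $[\cN^i(\K),\cN^j(\K)]\subseteq\cN^{i+j}(\K)$ (cf. \cite[Remark 1 ii)]{Camina}, \cite[Lemma 2.3]{Babenko13}). For $L$, Lemma \ref{L-simple computations}(3) applied with $m=i+1$ and $n=\sigma(j)$ shows every generator of the first type vanishes in all degrees below $m+n-1=i+\sigma(j)$, hence lies in $\cH^{i+\sigma(j)}(\K)$; and $\sigma(i+j)\le\sigma(i)+\sigma(j)\le i+\sigma(j)$, so it lies in $\cH^{\sigma(i+j)}(\K)$. Symmetrically the generators of the second type lie in $\cH^{j+\sigma(i)}(\K)\subseteq\cH^{\sigma(i+j)}(\K)$. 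Hence $L\subseteq\cH^{\sigma(i+j)}(\K)$ and $[G_i^\sigma,G_j^\sigma]\subseteq\cH^{\sigma(i+j)}(\K)\rtimes\cN^{i+j}(\K)=G_{i+j}^\sigma$.

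The proof is essentially bookkeeping once Lemmas \ref{Rmn}, \ref{L-simple computations}(3) and \ref{L-commutator subgroups of semi-direct products} are in place; the only spots requiring attention are using \emph{both} hypotheses on $\sigma$ in the right places (subadditivity for $\sigma(i+j)\le\sigma(i)+\sigma(j)$, and $\sigma(1)=1$ plus subadditivity for $\sigma(i)\le i$), and reading the correct degree shift $m+n-1$ — not the naive $m+n$ — out of Lemma \ref{L-simple computations}(3).
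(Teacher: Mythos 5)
Your proof is correct and follows essentially the same route as the paper: normality and the descending chain come from Lemma \ref{Rmn}, the commutator bound comes from the semidirect-product commutator structure together with Lemma \ref{L-simple computations}(3) giving degree $i+\sigma(j)$ (resp.\ $j+\sigma(i)$), and the conclusion uses subadditivity plus the derived inequality $\sigma(n)\le n$. You are in fact slightly more explicit than the paper in two places — isolating $\sigma(n)\le n$ as the consequence of $\sigma(1)=1$ and subadditivity, and verifying the topological base-of-neighborhoods condition via $\cR^n(\K)\subseteq G_n^\sigma\subseteq\cR^{\sigma(n)}(\K)$ — but this is the same argument.
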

    \begin{proof}
        From Lemma \ref{Rmn}, we see that $ G^\sigma_n $ form a chain of descending chain of normal subgroups of $\cR(\K)$.
        Remember that for $ h=1+ax^n+\mathscr{O}(x^{n+1})\in \cH(\K) $ and $ g=x+bx^{m+1}+\mathscr{O}(x^{m+2})\in \cN(\K) $, we have \begin{align*}
            h(g)h^{-1}=(h+abnx^{n+m}+\mathscr{O}(x^{n+m+1}))h^{-1}\in \cH^{m+n}(\K) .
        \end{align*}
        Combining with the following equation
          \begin{align*}
            [(h_1,g_1),(h_2,g_2)]=(\varphi(g_1)(h_2)h_2^{-1}\cdot\varphi(g_1g_2{\bar g_1})(h_1^{-1})h_1\cdot \varphi([g_1,g_2])(h_2^{-1})h_2,[g_1,g_2]),
        \end{align*}
        we get that $$ [\cH^{\sigma(m)}(\K)\rtimes \cN^m(\K),\cH^{\sigma(n)}(\K)\rtimes \cN^n(\K)]\subseteq \cH^{\min\{\sigma(m)+n,m+\sigma(n)\}}(\K)\rtimes \cN^{m+n}(\K) .$$
        By the properties of $ \sigma $, we have $ \sigma(m)+n\geq\sigma(m)+\sigma(n)\geq\sigma(m+n) $ and similarly for $ m+\sigma(n)\geq\sigma(m+n) $. Hence, $ G_n^\sigma $ is a filtration with the property we are looking for.
    \end{proof}
    Now we introduce the important notion of this section, the index-subgroups of $\cR(\F_p)$.
    For every $ I,J\subseteq\N $ we define
    \begin{align*}
        \cH(I):&=\{1+\sum_{i\in I}a_ix^i|a_i\in\F_p\},\\
        \cN(J):&=\{x+\sum_{j\in J}b_jx^{j+1}|b_j\in\F_p\},\\
        \cR(I,J):&=\{(1+\sum_{i\in I}a_ix^i,x+\sum_{j\in J}b_jx^{j+1})|a_i,b_j\in\F_p\}.
    \end{align*}

    It is clear the $\cR(I,J)$ is a closed subset of $\cR(\F_p)$ for every $ I,J\subseteq\N $. We call $ (I,J) $ an \textit{admissible index-pair} if $ \cR(I,J) $ is a subgroup of $\cR(\F_p)$. If $(I,J)$ is an admissible index-pair then we say that $\cR(I,J)$ is an \textit{index-subgroup} of $\cR(\F_p)$.
    \begin{lem}
        \label{H(I)}
        The set $ \cH(I) $ is a subgroup of $ \cH(\F_p) $ if and only if $ I $ is closed under addition.
    \end{lem}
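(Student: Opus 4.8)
The plan is to prove the two implications separately, directly from the definition; the whole point is that the additive closure of $I$ is exactly the condition making the exponents that occur in products and in inverses of elements of $\cH(I)$ stay inside $I$.

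For the implication ``$I$ closed under addition $\Rightarrow \cH(I)$ is a subgroup'', I would first check closure under multiplication: given $h=1+\sum_{i\in I}a_ix^i$ and $k=1+\sum_{j\in I}b_jx^j$ in $\cH(I)$, expanding $hk$ produces only the monomials $1$, the $x^i$ with $i\in I$, and the $x^{i+j}$ with $i,j\in I$, and by additive closure every exponent occurring lies in $I$, so $hk\in\cH(I)$. The identity $1$ trivially lies in $\cH(I)$. For inverses, write $h=1-u$ with $u=-\sum_{i\in I}a_ix^i$; since $u$ has zero constant term, the series $\sum_{m\ge0}u^m$ converges in $\F_p[[x]]$ and telescoping gives $h\cdot\sum_{m\ge0}u^m=1$, so $h^{-1}=\sum_{m\ge0}u^m$. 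For $m\ge1$ the power $u^m$ is an $\F_p$-combination of monomials $x^{i_1+\cdots+i_m}$ with all $i_\ell\in I$, hence (induction on $m$ using additive closure) of monomials with exponents in $I$, while $u^0=1$; thus $h^{-1}\in\cH(I)$. Alternatively one can run the coefficient recursion $c_k=-\sum_{i\in I,\,1\le i\le k}a_ic_{k-i}$ for $h^{-1}=1+\sum_{k\ge1}c_kx^k$ and prove $c_k\ne0\Rightarrow k\in I$ by induction on $k$. Either way $\cH(I)$ is a subgroup.

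For the converse, suppose $\cH(I)$ is a subgroup and let $i,j\in I$. Then $1+x^i$ and $1+x^j$ lie in $\cH(I)$, hence so does their product. If $i\ne j$, then $(1+x^i)(1+x^j)=1+x^i+x^j+x^{i+j}$ has coefficient $1\ne0$ at $x^{i+j}$; since every nonconstant monomial occurring in an element of $\cH(I)$ has its exponent in $I$, we get $i+j\in I$. If $i=j$, the same conclusion for $2i$ follows from $(1+x^i)^2=1+2x^i+x^{2i}$, whose coefficient at $x^{2i}$ is $1$ irrespective of $p$. Hence $I$ is closed under addition.

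There is no real obstacle here; the only step needing a little care is closure under inverses, and it is handled either by the convergent geometric series $\sum_m u^m$ or by the induction on the coefficient recursion, both reducing to the remark that the subsemigroup of $\N$ generated by $I$ equals $I$ precisely when $I$ is closed under addition. I note that although the statement is over $\F_p$, the same argument applies verbatim over any unital commutative ring.
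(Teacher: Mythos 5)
Your proof is correct and takes essentially the same approach as the paper: closure under multiplication is checked the same way, and the converse uses the same test elements $1+x^i$, $1+x^j$ (the paper does not even bother to separate the case $i=j$, which you handle carefully). The only real difference is the inverse step, where you sum the geometric series $h^{-1}=\sum_{m\ge 0}u^m$ while the paper argues by contradiction on the smallest exponent of $h^{-1}$ lying outside $I$; both arguments reduce to the observation that every exponent occurring in $h^{-1}$ is a sum of exponents of $h$, so this is a cosmetic rather than substantive difference.
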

    \begin{proof}
        From the follow equation \begin{align*}
            (1+\sum_{i\in I}a_ix^i)(1+\sum_{i'\in I}a'_{i'}x^{i'})=&1+\sum_{i,i'\in I}a_ia'_{i'}x^{i+i'},\\
        \end{align*}$ \cH(I) $ is closed under multiplication if and only if $ I $ is closed under addition. We only need to prove that $ \cH(I) $ is closed under inversion if $ I $ is closed under addition. Suppose there exists $ h\in \cH(I) $ such that $ h^{-1}\notin \cH(I) $. Let $ i_0 $ be the smallest positive degree of $ h^{-1} $ with non-zero coefficient such that $ i_0\notin I $. The coefficient of $ i_0 $-degree in $ hh^{-1} $ is $ c=\sum_{i=0}^{i_0}a_ia'_{i_0-i} $, where $ a_i,a'_j $ are coefficients at degree $ i,j $ of $ h,h^{-1} $ respectively. As $ i_0\notin I $, we see that $ a_ia'_{i_0-i}\neq0 $ only if $ i=0 $ or $ i,i_0-i\in I $. But $ i,i_0-i $ cannot both be inside $ I $ when $ I $ is closed under addition so the only case $ a_ia'_{i_0-i}\neq0 $ is when $ i=0 $. Hence, $c= a'_{i_0}\neq0 $, which is a contradiction.
    \end{proof}
    \begin{lem}\label{admissible pair} Let $I,J\subset \N$. Then
        $ (I,J) $ is an admissible index-pair if and only if all the followings happen\begin{enumerate}
            \item For $ j\in J $ and $ n\in\{1,\ldots j+1\} $ with $ \left(\begin{array}{c}
                j+1\\n
            \end{array}\right) $ is not divisible by $ p $ we have $ \{j+nj'|j'\in J\}\subseteq J; $
            \item $ I $ is closed under addition;
            \item For $ i\in I $ and $ n\in\{1,\ldots i\} $ with $ \left(\begin{array}{c}
                i\\n
            \end{array}\right) $ is not divisible by $ p $ we have $ \{i+nj|j\in J\}\subseteq I. $
        \end{enumerate}
    \end{lem}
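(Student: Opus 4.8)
The plan is to factor the subgroup property of $\cR(I,J)$ through the three structural ingredients of $\cR(\F_p)$ and match each with one of (1)--(3). Since $\cR(\F_p)$ is profinite by Corollary~\ref{C-profinite}, hence compact Hausdorff, and $\cR(I,J)$ is a closed subset containing the identity $(1,x)$, the set $\cR(I,J)$ is a subgroup if and only if it is closed under multiplication: in a compact Hausdorff group every nonempty closed subsemigroup is a subgroup, because for any element $g$ of it the closure of $\{g^n:n\ge 1\}$ equals $\overline{\langle g\rangle}$ and hence already contains $1$ and $g^{-1}$. Writing out $(h_1,g_1)(h_2,g_2)=(h_1\,h_2(g_1),\,g_1\circ g_2)$, closure under multiplication is equivalent to the conjunction of: (a) $\cN(J)$ is closed under substitution (specialize $h_1=h_2=1$); (b) $\cH(I)$ is closed under multiplication (specialize $g_1=g_2=x$); (c) $h(g)=\varphi(g)(h)\in\cH(I)$ for all $h\in\cH(I)$, $g\in\cN(J)$ (specialize $h_1=1$, $g_2=x$). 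These specializations give necessity, and conversely (a)--(c) yield $h_2(g_1)\in\cH(I)$, hence $h_1h_2(g_1)\in\cH(I)$, and $g_1\circ g_2\in\cN(J)$. By Lemma~\ref{H(I)}, (b) is exactly condition (2), so it remains to prove (a)$\iff$(1) and (c)$\iff$(3); note that (a) together with profiniteness of $\cN(\F_p)$ makes $\cN(J)$ automatically a subgroup, which is what is used for inverses.

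Necessity of (1) and (3) is a one-line computation with monomial elements. Taking $g_1=x+t x^{j+1}$ and $g_2=x+s x^{j'+1}$ with $j,j'\in J$ gives $g_1\circ g_2=g_2+t\sum_{n\ge 0}\binom{j+1}{n}s^n x^{\,j+1+nj'}$, so the coefficient of $x^{\,j+1+nj'}$ is exactly $\binom{j+1}{n}ts^n$; with $t=s=1$ this is nonzero whenever $p\nmid\binom{j+1}{n}$, forcing $j+nj'\in J$, which is (1). Similarly, with $h=1+t x^{i}$ ($i\in I$) and $g=x+s x^{j+1}$ ($j\in J$) one gets $h(g)=1+t\sum_{n\ge 0}\binom{i}{n}s^n x^{\,i+nj}$, forcing $i+nj\in I$ whenever $p\nmid\binom{i}{n}$, which is (3).

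The sufficiency direction is the core of the argument. Here the point is that (1) and (3) only control the diagonal exponents $j+nj'$ and $i+nj$, whereas $g_1\circ g_2$ and $h(g)$ also contain the fully mixed monomials arising from the multinomial expansions of $(1+\sum_{j'\in J}b_{j'}x^{j'})^{j+1}$ and $(1+\sum_{j\in J}b_{j}x^{j})^{i}$. It suffices to show that for each $j\in J$ and $g_2\in\cN(J)$ every exponent $m$ occurring in $g_2^{\,j+1}$ satisfies $m-1\in J$ (then $g_1\circ g_2=g_2+\sum_{j\in J}a_j g_2^{\,j+1}\in\cN(J)$), together with the analogous statement for $g^{i}$ and $I$. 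I would prove this using two features of characteristic $p$: the Frobenius identity $\bigl(x+\sum_{j'}b_{j'}x^{j'+1}\bigr)^{p^k}=x^{p^k}+\sum_{j'}b_{j'}^{p^k}x^{(j'+1)p^k}$, and the Lucas/Kummer criterion telling us exactly when a binomial or multinomial coefficient is nonzero modulo $p$ (no carries in the base-$p$ addition). Writing $j+1=\sum_k d_kp^k$ and factoring $g_2^{\,j+1}=\prod_k(g_2^{\,p^k})^{d_k}$, each factor has exponent $d_k<p$ and so expands with only nonzero coefficients; a no-carry analysis then shows that in any surviving monomial the multiplicity $m_{j'}$ of each $j'\in J$ satisfies $m_{j'}\preceq j+1$ digitwise, so condition (1) is applicable. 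One then runs an induction over the factors (or over total weight), applying (1) at each stage to the element of $J$ manufactured at the previous stage, to conclude $m-1\in J$ for every surviving exponent; the same scheme with (3) in place of (1) gives (c), since each $g^{i}$ then has all its exponents in $I$, whence $h(g)=1+\sum_{i\in I}a_i g^{i}\in\cH(I)$.

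The step I expect to be the main obstacle is precisely this last induction: keeping track of the base-$p$ digit constraints through the iterated applications of (1) and (3), and checking that the diagonal closure really does propagate to all mixed terms. (The case $p=2$ is easier, since there the mixed terms with even multiplicities vanish automatically.) Everything else is either formal semidirect-product manipulation or the short coefficient computations above.
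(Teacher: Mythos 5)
Your decomposition of the problem (closure under multiplication suffices by compactness; split into $\cN(J)$ closed under substitution, $\cH(I)$ closed under multiplication, and $h(g)\in\cH(I)$; necessity via monomial test elements) matches the paper, and the necessity computations are correct. The gap is in the sufficiency direction, which you yourself flag as the main obstacle and which your sketch does not actually close. The induction you propose --- ``applying (1) at each stage to the element of $J$ manufactured at the previous stage'' --- fails as stated: after producing $j^{(1)}=j+m_{j_1'}j_1'\in J$ from the first digit-controlled application of (1), the next step needs $\binom{j^{(1)}+1}{m_{j_2'}}\not\equiv 0\pmod p$, and the base-$p$ digits of $j^{(1)}+1=j+1+m_{j_1'}j_1'$ bear no useful relation to those of $j+1$. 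Concretely, take $p=5$ and $1,3,4\in J$ with the monomial of $g^{2}$ having multiplicities $m_3=m_4=1$ (multinomial coefficient $\binom{2}{1,1,0}=2\neq 0$): the chain gives $j^{(1)}=1+3=4$, but then $\binom{5}{1}\equiv 0\pmod 5$, so (1) cannot be applied to conclude $8\in J$ at the next step. (Here $8\in J$ does follow from (1), but only through a longer, non-obvious chain via $2\in J$.) So the mixed multinomial terms in $g_2^{\,j+1}$ and $g^{\,i}$ are not controlled by the direct digit-tracking argument, and this is precisely the core of the lemma.

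The paper avoids this entirely by never expanding a power of a \emph{general} series. For condition (1) it simply quotes \cite[Theorem 1.1]{Index}, which is exactly the statement that $\cN(J)$ is a subgroup iff (1) holds. For the remaining point $h(g)\in\cH(I)$ it substitutes only \emph{monomial} elements $g=x+bx^{j+1}$, for which $h(g)=1+\sum_{i\in I}\sum_n a_i\binom{i}{n}b^nx^{i+nj}$ is a binomial expansion directly controlled by (3); since $h\circ(g_1\circ g_2)=(h\circ g_1)\circ g_2$, this propagates to all finite products of monomials, and then to all of $\cN(J)$ by writing each element as a limit of such products (the closure of the semigroup $\widehat{\mathscr{S}}$ generated by the monomials is a subgroup by \cite[Lemma 2.2]{Index} and contains $\cN(J)$) together with continuity of substitution. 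You already invoke the closed-subsemigroup fact in your first paragraph; the fix for your argument is to use it again here, replacing the multinomial induction by this ``reduce to monomial substitutions, then take closures'' bootstrap, or simply to cite \cite[Theorem 1.1]{Index} for the $\cN(J)$ part as the paper does.
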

    \begin{proof}
        First, we show the ``only if" part. Assume that $ (I,J) $ is an admissible index-pair. As $ \cN(J) $ is a subgroup of $ N(\F_p) $ applying \cite[Theorem 1.1]{Index}, we get condition (1). On the other hand, $ \cH(I) $ is a subgroup of $ \cH(\F_p) $, applying Lemma \ref{H(I)} we get (2).

        For the last condition, we observe that for every $ h\in \cH(I) $ and $ g\in \cN(J) $ we have $ (1,g)(h,{\bar g})=(h(g),x)$ and hence $h(g)\in \cH(I) $ .
        Take $ h=1+x^i $ and $ g=x+x^{j+1} $ with $i\in I$ and $j\in J$, we have \begin{align*}
            h(g)=1+(x+x^{j+1})^i=1+\sum_{n=0}^{i}\left(\begin{array}{c}
                i\\n
            \end{array}\right)x^{nj+i}.
        \end{align*} Therefore, if $ \left(\begin{array}{c}
            i\\n
        \end{array}\right) $ is not divisible by $ p $ we have $ \{i+nj|j\in J\}\subseteq I. $

        Now, we show the ``if" part. From the definition of the operation on Riordan group
              \begin{align*}
		(h',g)(h,g')=(h'h(g),g'(g))\text{ and }(h,g)^{-1}=(h^{-1}({\bar g}),{\bar g}).
        \end{align*}

        To make $ \cR(I,J) $ a group we need $ h'h(g),h^{-1}({\bar g})\in \cH(I) $ and $ g'(g),{\bar g}\in \cN(J) $ for every $ h,h'\in \cH(I) $ and $ g,g'\in \cN(J).$
        From (1) and \cite[Theorem 1.1]{Index} we get that $ g'(g),{\bar g}\in \cN(J) $ for every $g,g'\in \cN(J)$. Lemma \ref{H(I)} and condition (2) give us that $ \cH(I) $ is a subgroup of $ \cH(\F_p) $. Thus, we now only need to show $ h(g)\in \cH(I)$ for every $h\in \cH(I),g\in \cN(J)$. Put
        $$ \mathscr{S}:= \{x+ax^{j+1}:j\in J, a\in \F_p\}.  $$
        Let  $ \widehat{\mathscr{S}} $  be the sub-semigroup generated by $\mathscr{S}$, i.e.
       $$ \widehat{\mathscr{S}}:=\{g_1\circ g_2\circ\cdots \circ g_s|s\in \N, g_i\in  \mathscr{S} \mbox{ for every } 1\leq
      s\}.$$

        We endow $\cN(\F_p)=\varprojlim \cN(\F_p)/\cN^{n+1}(\F_p)$ with its inverse limit topology. Then $\operatorname{cl}(h(\widehat{\mathscr{S}}))$, the topological closure of $h(\widehat{\mathscr{S}})$ in $\cN(\F_p)$ is also a semigroup and hence from \cite[Lemma 2.2]{Index} it is a subgroup of $\cN(\F_p)$. For $ h=1+\sum_{i\in I}a_ix^i\in \cH(I) $ and $ g=x+bx^{j+1}\in\mathscr{S} $, we have \begin{align*}
            h(g)=&1+\sum_{i\in I}\sum_{n=0}^{i}a_i\left(\begin{array}{c}
                i\\n
            \end{array}\right)b^nx^{nj+i}\in \cH(I).
        \end{align*} So $ h(g)\in \cH(I) $ for any $ h\in \cH(I) $ and $ g\in\widehat{\mathscr{S}}. $ From continuity, $ h(\operatorname{cl}(\widehat{\mathscr{S}}))\subseteq\operatorname{cl}(h(\widehat{\mathscr{S}}))\subseteq \cH(I). $ The result follows.
    \end{proof}
    From now on, for simplicity, we denote $\cN=\cN(\F_p), \cN^n=\cN^n(\F_p), \cH^n=\cH^n(\F_p)$ and $\cR=\cR(\F_p)$ for every $n\in \N$.
    We also fix the filtration $\{ \cH^n\rtimes \cN^n \}_n$. Before studying the Hausdorff dimensions of index-subgroups of $\cR$, let us recall the lower and upper density. Let $I\subset \N$. The \textit{lower and upper density} of $I$ are defined as follows.
    \begin{align*}
        \ldense(I):&=\liminf_{m\in \N}\frac{|\{i\in I:i\leq m\}|}{m},\\
        \udense(I):&=\limsup_{m\in \N}\frac{|\{i\in I:i\leq m\}|}{m}.
    \end{align*}
    If $\ldense(I)=\udense(I)$ then the \textit{density} of $I$, $\dense(I)$, is defined as this common value.

    We define \begin{align*}
        \inspec(\cR):=\{\dim_H(\cR(I,J))|(I,J)\text{ is an admissible index-pair}\}\subset \hspec(\cR).
    \end{align*}

    Before investigating further $\dim_H(\cR(I,J))$, where $(I,J)$ is an admissible index-pair, let us define functions $W:\N\to [0,1]$ and $w:(p\N-1)\to [0,\dfrac{1}{p}]$, which have been used in \cite{Index}, as follows. For $m\in \N$ with $m=\sum_{n=0}^\infty m_np^n$, we put
    $$W(m):=\sum_{n=0}^\infty m_np^{-n-1}, \mbox{ } m_n\in \{0,1,\dots, p-1\}.$$
    The function $w: p\N-1\to [0,\dfrac{1}{p}]$ is defined by $w(j):=W(j+1)$ for every $j\in p\N-1$.
    For every $\xi\geq 0$, we define $$J(\xi):=\{j\in (p\N-1)|w(j)<\xi\}.$$

    The following lemma is an adaptation of \cite[Lemma 4.1]{Index}.
    \begin{lem}
        \label{L-the reverse inclusion}
        Let $ \xi\in[0,\dfrac{1}{p}] $ then $J(\xi)$ satisfies the condition 1) of Lemma \ref{admissible pair} and $ \dense(J(\xi)\cap s\N)=s^{-1}\xi $ for any $ s $ which is not divisible by $ p. $
    \end{lem}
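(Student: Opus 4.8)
The strategy is to translate both assertions into facts about base-$p$ digit expansions via the function $W$, using three elementary properties. For $m=\sum_k m_kp^k$ the base-$p$ expansion, these are: (i) $W(a+b)\le W(a)+W(b)$, with equality exactly when adding $a$ and $b$ in base $p$ produces no carry; (ii) $W(p^km)=p^{-k}W(m)$, and more crudely $W(cm)\le cW(m)$ for $c\in\N$; (iii) if the digits of $n$ are dominated by those of $j+1$ — write $n\preceq j+1$ — then $W(j+1)=W(n)+W(j+1-n)$. Each follows because carrying in base $p$ only decreases the quantity $\sum_k(\text{coefficient})\,p^{-k-1}$.

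First I would verify that $J(\xi)$ satisfies condition (1) of Lemma~\ref{admissible pair}. Fix $j\in J(\xi)$, fix $n\in\{1,\dots,j+1\}$ with $p\nmid\binom{j+1}{n}$, and fix $j'\in J(\xi)$; the goal is $j+nj'\in J(\xi)$. By Lucas' theorem $p\nmid\binom{j+1}{n}$ is equivalent to $n\preceq j+1$; since $j\in p\N-1$ the units digit of $j+1$ is $0$, forcing the units digit of $n$ to be $0$ as well, i.e.\ $n=pn_1$. As $j'\equiv-1\pmod p$ this gives $j+nj'+1=(j+1)+nj'\equiv-n\equiv0\pmod p$, so $j+nj'\in p\N-1$; it remains to show $w(j+nj')=W\big((j+1)+nj'\big)<\xi$. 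Setting $\ell'=(j'+1)/p$, so $W(\ell')=p\,w(j')$ by (ii), one rewrites
\begin{align*}
 (j+1)+nj'=(j+1-n)+n(j'+1)=(j+1-n)+p^{2}n_1\ell'.
\end{align*}
Applying (i) to the two summands, then $W(p^{2}n_1\ell')\le p^{-1}W(n_1)W(\ell')$ (from (ii) together with $\sum_k(n_1)_kp^{-k}=pW(n_1)$), then $W(n_1)=pW(n)$, $W(\ell')=p\,w(j')$, and (iii) in the form $W(j+1-n)=w(j)-W(n)$, yields
\begin{align*}
 W\big((j+1)+nj'\big)\le\big(w(j)-W(n)\big)+p\,W(n)\,w(j')=w(j)-W(n)\big(1-p\,w(j')\big).
\end{align*}
Since $w(j')<\xi\le 1/p$ we have $1-p\,w(j')>0$, so the subtracted term is nonnegative and $w(j+nj')\le w(j)<\xi$; thus $j+nj'\in J(\xi)$, which is condition (1).

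For the density statement, fix $s$ with $p\nmid s$ and $\xi\in[0,1/p]$. The map $j\mapsto m=j+1$ identifies $J(\xi)\cap s\N$ with $\{m\in p\N:m\equiv1\pmod s,\ W(m)<\xi\}$, and writing $m=pn$ (legitimate since $p\mid m$), using $W(pn)=p^{-1}W(n)$ and $pn\equiv1\pmod s\iff n\equiv p^{-1}\pmod s$, identifies this in turn with $A:=\{n\in\N:n\equiv p^{-1}\pmod s,\ W(n)<p\xi\}$, the correspondence being $j=pn-1$. Counting elements up to $X$ then gives $\dense(J(\xi)\cap s\N)=p^{-1}\dense(A)$, so it suffices to prove that for $\gcd(s,p)=1$, any residue $c$ and any $\beta\in[0,1]$ the set $A_{c,\beta}=\{n:n\equiv c\pmod s,\ W(n)<\beta\}$ has density $\beta/s$ in $\N$; the special case $\beta=p\xi\le1$ then yields $\dense(J(\xi)\cap s\N)=p^{-1}(p\xi/s)=s^{-1}\xi$. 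To compute $\dense(A_{c,\beta})$ I would condition on $n\bmod p^{k}$: since $n=(n\bmod p^{k})+p^{k}\lfloor n/p^{k}\rfloor$ is a carry-free sum, $W(n\bmod p^{k})\le W(n)<W(n\bmod p^{k})+p^{-k}$; moreover $r\mapsto W(r)$ is a bijection $\{0,\dots,p^{k}-1\}\to\{\ell p^{-k}:0\le\ell<p^{k}\}$, so $\beta p^{k}+O(1)$ residues $r$ satisfy $W(r)<\beta$ and all but $O(1)$ of them already satisfy $W(r)<\beta-p^{-k}$. Because $\gcd(s,p^{k})=1$, each residue class modulo $p^{k}$ meets the progression $n\equiv c\pmod s$ in a set of density $1/(sp^{k})$; summing this over the relevant residues shows $\ldense(A_{c,\beta})$ and $\udense(A_{c,\beta})$ both coincide with $\beta/s$ up to $O(p^{-k})$, and letting $k\to\infty$ gives $\dense(A_{c,\beta})=\beta/s$.

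The one genuinely delicate feature throughout is the behaviour of carries in base $p$: it underlies property (i), where the inequality fortunately runs in the direction we need, and it is the reason the density computation requires $\gcd(s,p^{k})=1$, which lets the digit condition on $n\bmod p^{k}$ be decoupled from the congruence $n\equiv c\pmod s$.
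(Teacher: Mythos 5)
Your proof is correct. For the density statement your argument is in essence the same as the paper's: both reduce membership in $J(\xi)$ to a condition on base-$p$ digits and then use that $s$ is invertible modulo powers of $p$ to intersect with the progression $s\N$ at the expected rate. The bookkeeping differs: the paper writes $\xi=\sum_k \xi_kp^{-k-1}$ and decomposes $J(\xi)\cap s\N$ exactly into the families $J_m$ of arithmetic progressions modulo $sp^{m+1}$ (indexed by the first digit position where $i+1$ drops below $\xi$), together with a complementary family $S_m$ to pin down the upper density; you instead sandwich the set between two unions of residue classes modulo $p^k$ using $W(n\bmod p^k)\le W(n)<W(n\bmod p^k)+p^{-k}$ and let $k\to\infty$. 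Your sandwich is arguably cleaner since it gives upper and lower bounds in one stroke, while the paper's decomposition is more explicit. The more substantive difference is in the first claim: the paper simply cites \cite[Example 3.5]{Index} for the fact that $J(\xi)$ satisfies condition (1) of Lemma \ref{admissible pair}, whereas you prove it from scratch via Lucas' theorem and the subadditivity and scaling properties of $W$; your chain of estimates (including the step $W(p^2n_1\ell')\le p^{-1}W(n_1)W(\ell')$ obtained by expanding $n_1$ in base $p$, and the carry-free identity $W(j+1-n)=w(j)-W(n)$) checks out, and the final inequality $w(j+nj')\le w(j)-W(n)(1-pw(j'))\le w(j)<\xi$ uses $\xi\le 1/p$ exactly where it is needed. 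So the proposal is a complete, self-contained proof, slightly more than the paper itself provides.
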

    \begin{proof}
        From \cite[Example 3.5]{Index} we know that $J(\xi)$ satisfies the condition 1) of Lemma \ref{admissible pair}.

        We write $\xi$ in the form $\sum_{k=1}^\infty \xi_kp^{-k-1}$, where $\xi_k\in \{0,1,\dots, p-1\}$. For every $m\in \N$, we define
        \begin{align*}
            J_m:&=\bigcup_{0\leq t<\xi_m}\{i\in s\N|i\equiv_{p^{m+1}}-1+\sum_{n=1}^{m-1}\xi_np^n+tp^m\}, \mbox{ and }\\
            S_m:&=\bigcup_{\xi_m< t\leq p-1}\{i\in s\N|i\equiv_{p^{m+1}}-1+\sum_{n=1}^{m-1}\xi_np^n+tp^m\}.
        \end{align*}
        For every $m\in \N$ and $0\leq t<\xi_m$, because $ s $ is coprime with $ p^{m+1} $, there exists a unique element $ k\in\{1,\ldots,p^{m+1}-1\} $ such that $ ks\equiv_{p^{m+1}}-1+\sum_{n=1}^{m-1}\xi_np^n+tp^m $. Thus, for every $m\in \N$, $ J_m $ is a union of disjoint arithmetic progressions with increment $ sp^{m+1} $.  Note that the density of an arithmetic progression with increment $t$ is $1/t$. Hence $\dense(J_m)=\dfrac{\xi_m}{s}p^{-m-1}$ for every $m\in \N$.
        Therefore, for every $m\in \N$, we have
        \begin{align*}
            \ldense(J(\xi))&\geq \ldense(\bigcup_{i=1}^m J_i)\\
            &=\sum_{i=1}^m\dense(J_i)\\
            &=\frac{1}{s}\sum_{i=1}^{m}\xi_ip^{-i-1}.
        \end{align*}
        Letting $m\to \infty$ we get that $\ldense(J(\xi))\geq s^{-1}\xi$.

        Similarly, for every $m\in \N$ we have
        \begin{align*}
            \ldense(S)&\geq \ldense(\bigcup_{i=1}^m S_i)\\
            &=\sum_{i=1}^m\dense(S_i)\\
            &=\frac{1}{s}\sum_{i=1}^{m}(p-1-\xi_i)p^{-i-1}.
        \end{align*}
        Letting $m\to \infty$ we get that $\ldense(S)\geq s^{-1}(1/p-\xi)$. On the other hand, we have $J(\xi)$ and $S$ are disjoint, $J(\xi)\cup S=s\N\cap (p\N-1)\setminus w^{-1}(\xi)$, and the set $w^{-1}(\xi)$ has at most one element. Hence, as $s$ is coprime with $p$ we get that
        $$\udense(J(\xi))=\frac{1}{s}[1/p-(1/p-\xi)]=\frac{\xi}{s}.$$
    \end{proof}
    Inspired by \cite[Proposition 4.3]{Index}, we characterize the index-subgroups of $\cR(\F_p)$ as follows.
    \begin{lem}
        \label{L-characterizations of admissible pairs}
        Let $(I,J)$ be an admissible pair. Then one of the following holds:
        \begin{enumerate}
            \item $I$ is empty and $J$ satisfies the condition 1) of Lemma \ref{admissible pair};
            \item $I$ is a cofinite subset of $ sp^r\N $, where $ s\in\N $ is not divisible by $ p $ and $ r\in\N\cup\{0\}$. In this case, $J\subseteq s\N$ and furthermore $J$ must satisfies one of the following properties:
            \begin{enumerate}[label=(\roman*)]
                \item $ J\subseteq(p\N-1)\cap s\N $ and so $\ldense(J)\in [0,\dfrac{1}{sp}]$;
                \item there exists $ s_0\in\N $ such that $ J $ is a cofinite set of $ s_0\N $ and so $ \dense(J) $ has the form of $ \dfrac{1}{su} $, for some $u\in \N$;
                \item There exist $ s_1,v\in\N $ with $ s_1 $ is not divisible by $ p $ such that $ J\subseteq s_1\N\cap(p^v\N\cup(p\N-1)) $ and $ J\cap(p^v\N\cup(p^v\N-1)) $ is cofinite subset of $ s_1\N\cap(p^v\N\cup(p^v\N-1)) $. Moreover, there exists $ t\in\{1,\ldots,p^{v-1}\} $, $ u\in\N $ such that $ \dense(J)=\dfrac{1+t}{sup^v} $.
            \end{enumerate}
        \end{enumerate}
    \end{lem}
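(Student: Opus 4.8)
The strategy is to analyze the two cases $I = \emptyset$ and $I \neq \emptyset$ separately, and in the second case to extract strong structural constraints on $I$ first, then use those to constrain $J$. The case $I = \emptyset$ is immediate: if $I$ is empty then $\cH(I) = \{1\}$, so $\cR(\emptyset, J) = \{1\} \rtimes \cN(J)$ is a subgroup precisely when $\cN(J)$ is a subgroup of $\cN(\F_p)$, which by \cite[Theorem 1.1]{Index} is exactly condition 1) of Lemma \ref{admissible pair}; this gives item (1). So assume from now on $I \neq \emptyset$.

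The first substantial step is to pin down $I$. By Lemma \ref{admissible pair}(2), $I$ is closed under addition, so it is a numerical sub-semigroup of $\N$; let $d = \gcd(I)$ and write $d = sp^r$ with $p \nmid s$. Standard numerical-semigroup theory shows $I$ is a cofinite subset of $d\N$. I would then invoke Lemma \ref{admissible pair}(3): taking any $i \in I$ and using that $\binom{i}{1} = i$ — which is nonzero mod $p$ exactly when $p \nmid i$ — together with the fact that $I$ contains elements coprime-to-$p$ multiples of $s$ (since $I$ is cofinite in $sp^r\N$, it contains $sp^r \cdot m$ for all large $m$, in particular for $m$ a large prime $\neq p$), we force $\{i + nj : j \in J\} \subseteq I$ for suitable $i$ and all $n$ appearing. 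Pushing this through, the condition "$i + j \in I$ for $i \in I$ large and $j \in J$" forces $J \subseteq s\N$ (otherwise $i+j$ would not be a multiple of $d$'s $s$-part, contradicting $I \subseteq sp^r\N$ eventually). This establishes the first two assertions of item (2): $I$ cofinite in $sp^r\N$ and $J \subseteq s\N$.

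The remaining and most delicate step is the trichotomy (i)–(iii) for $J$. Here $\cN(J)$ must be a subgroup of $\cN(\F_p)$, so condition 1) of Lemma \ref{admissible pair} applies to $J$, and moreover the extra constraint from Lemma \ref{admissible pair}(3) links $J$ back to $I$. The plan is to follow the argument of \cite[Proposition 4.3]{Index}, which classifies subgroups $\cN(J)$ of the Nottingham group: one examines, for each $j \in J$, the set of binomial coefficients $\binom{j+1}{n}$ that are nonzero mod $p$ (governed by Lucas' theorem and the base-$p$ digits of $j+1$), and this dichotomizes according to whether $J$ contains elements $j$ with $j+1$ having a "long tail" of digits — leading to the cofinite-in-$s_0\N$ alternative (ii) with density $\frac{1}{su}$ — or whether $J$ stays inside $(p\N - 1) \cap s\N$, giving (i) with $\ldense(J) \in [0, \frac{1}{sp}]$ (the upper bound $\frac{1}{sp}$ because $p\N-1$ has density $\frac{1}{p}$ and intersecting with $s\N$ divides by $s$), or an intermediate mixed regime producing (iii). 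The density computations in (ii) and (iii) come from the fact that the relevant sets are finite unions of arithmetic progressions, so one computes densities additively as in the proof of Lemma \ref{L-the reverse inclusion}; the extra interaction with $I$ (cofinite in $sp^r\N$) is what pins $s, u, t, v$ to the stated forms.

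I expect the main obstacle to be the bookkeeping in the trichotomy: carefully tracking how the mod-$p$ non-vanishing of $\binom{i}{n}$ and $\binom{j+1}{n}$ (via Lucas) constrains the base-$p$ expansions of elements of $I$ and $J$ simultaneously, and verifying that no further cases arise beyond (i)–(iii). The adaptation of \cite[Proposition 4.3]{Index} handles the $J$-side in isolation; the genuinely new content is checking that the coupling condition Lemma \ref{admissible pair}(3) — which has no analogue in the pure Nottingham setting — does not eliminate any of these cases and indeed forces $J \subseteq s\N$ with the same $s$ governing $I$.
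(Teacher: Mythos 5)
There is a concrete gap at the one genuinely new step of your plan, namely the derivation of $J\subseteq s\N$. You propose to apply Lemma \ref{admissible pair}(3) with $n=1$, using that $\binom{i}{1}=i$ is nonzero mod $p$ for a suitable $i\in I$, where $i=sp^r m$ with $m$ a large prime $\neq p$. But such an $i$ is still divisible by $p^r$, so for $r\geq 1$ \emph{every} element of $I\subseteq sp^r\N$ satisfies $\binom{i}{1}\equiv 0 \pmod p$ and the $n=1$ instance of condition (3) is vacuous; there are no ``coprime-to-$p$'' elements in $I$ at all. Worse, even if you could conclude $i+j\in I$ for $i\in I$, $j\in J$, then $i+j\in sp^r\N$ and $i\in sp^r\N$ would force $j\in sp^r\N$, i.e.\ $J\subseteq sp^r\N$ --- strictly stronger than the claimed $J\subseteq s\N$ and false in general: $(I,J)=(p\N,\N)$ is admissible (it is one of the pairs used in the proof of Theorem \ref{T-main 2}), yet $\N\not\subseteq p\N$. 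The correct choice, and the one the paper makes, is $n=p^r$: by Lucas' theorem $\binom{sp^r}{p^r}\not\equiv 0\pmod p$, so condition (3) yields $I+p^rJ\subseteq I\subseteq sp^r\N$, whence $p^rj\in sp^r\Z$ and $j\in s\N$. Your argument happens to be fine only in the degenerate case $r=0$.

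The remainder of your plan agrees structurally with the paper. The case $I=\emptyset$ is handled identically; the cofiniteness of $I$ in $\gcd(I)\N$ is the same numerical-semigroup fact (the paper writes out an elementary gcd/arithmetic-progression argument rather than citing it). For the trichotomy (i)--(iii) you defer to an adaptation of \cite[Proposition 4.3]{Index}; this is indeed what the paper does, but note the adaptation is not a black box: the paper re-runs the argument with $s_0=\gcd(J\setminus(p\N-1))$, the translation set $T=(J+s_0\Z)\cap\{1,\dots,s_0\}$ giving $\dense(J)=|T|/s_0$, a binomial-coefficient argument to get $J\subseteq s_1\N$ in case (iii), and an induction ($j\in J\cap(p^a\N-1)$ implies $j+(j+1)j\in J\cap(p^{2a}\N-1)$) to show $J$ meets $p^v\N-1$. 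Crucially, the divisibility $s\mid s_0$ and $s\mid s_1$ that pins the densities to the stated forms $\frac{1}{su}$ and $\frac{1+t}{sup^v}$ comes exactly from the coupling $J\subseteq s\N$ --- i.e.\ from the step that is wrong as you have written it.
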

    \begin{proof}
        As $(I,J)$ is an admissible pair, applying Lemma \ref{admissible pair} we get that $I$ is closed under addition and hence $ I $ is either empty or there exists $ t\in\N $ such that $ I $ is an cofinite subset of $ t\N $. In fact, take $ t=\operatorname{gcd}(I) $ when $ I $ is not empty, we prove that for large enough $ N\in\N $ we have $ \{sn|n\geq N,n\in\N\}\subseteq I $. Because $ I $ is non-empty, take two elements $ a_0, a_1\in I $ which are not necessarily different. Let $ s_1:=\operatorname{gcd}(a_0,a_1) $, then there exist $ u,v\in\N_0 $ such that $ 1=ub_0-vb_1 $, where $ a_i=b_is_1 $. Then $ (kb_1+l)s_1=lua_0+(k-lv)a_1\in I $ for any $ k\geq b_1v,0\leq l\leq b_1-1 $. In other word, $ \{s_1n|n\geq b_1^2v,n\in\N\}\subseteq I $. If $ s_1=t $ then we are done. If $ s_1>t $ then there exists $ a_2\in I $ such that $ t\leq s_2:=\operatorname{gcd}(s_1,a_2)<s_1 $. There exists $ a_1'\in\{s_1n|n\geq b_1^2v,n\in\N\} $ such that $ s_2=\operatorname{gcd}(a_1',a_2) $ (take $ n $ is coprime with $ s_2 $). With the same argument as $ s_1 $ there exists $ N_2 $ such that $ \{s_2n|n\geq N_2,n\in\N\}\subseteq I $. We make the same process until we get $ s_m=t$.

        If $ I $ is empty then we get the case 1).

        Now we consider that $ I$ is a cofinite subset of $q\N$ for some $q\in \N$. We can write $q$ as $ sp^r $, where $ s\in\N $ is not divisible by $ p $ and $ r\in\N\cup\{0\}. $ As $s$ is not divisible by $p$, we get that $ \left(\begin{array}{c}
            sp^r\\p^r
        \end{array}\right) $ is not divisible by $ p $ and hence $ I+p^rJ\subseteq I $. This leads to $ J\subseteq s\N $.

        If $ J\subseteq(p\N-1)\cap s\N $ we get the case i) and $ \ldense(J)\in[0,\dfrac{1}{sp}] $.

        Now assume that $J\nsubseteq (p\N-1)$. We define $$s_0:=\operatorname{gcd}(S), \mbox{ where } S:=J\setminus (p\N-1).$$
        For every $j\in S$ we have $ \left(\begin{array}{c}
            j+1\\1
        \end{array}\right) $ is not divisible by $ p $ and hence from Lemma \ref{admissible pair} we get that $J+S\subset J$. Hence there exists $N\in \N$ such that $J+\{s_0n:n\in \N \mbox{ with } n\geq N\}\subset J$.
        We define $$T:=(J+s_0\Z)\cap \{1,2,\dots, s_0\}.$$ Then $s_0\in T$. There exists a large enough $M\in \N$ such that
        \begin{align}
            \label{F-temp}
            \{j\in J|j>s_0M\}=\{s_0n|n\in \N \mbox{ with } n\geq M\}+T.
        \end{align}
        Then in particular,
        \begin{align}
            \label{F-temp 1}
            \dense(J)=\frac{|T|}{s_0}.
        \end{align}
        If $J\subset s_0\N$ then
        $s_0\N\setminus J$  finite and $\dense(J)=\dfrac{1}{s_0}$. We get the case ii). Since $ J\subseteq s\N $ that only happens if $ s|s_0 $. Then $ \dense(J) $ has the form of $ \dfrac{1}{su} $ for some $u\in \N$.

        Next, let us assume that $J\nsubseteq s_0\N$. Let $j\in J\setminus s_0\N$ then by the definition of $s_0$ we get that $j\in (p\N-1)$ and hence
        $$J\subset s_0\N\cup (p\N-1).$$
        On the other hand, from \eqref{F-temp}, we get $j+\{s_0n|n\geq M\}\subset J\setminus s_0\N\subset (p\N-1)$. Hence $s_0$ is divisible by $p$. Therefore, $s_0=p^vs_1$ for some $v,s_1\in \N$ with $s_1$ is not divisible by $p$.

        Now we will prove that $J\subset s_1\N$. As $S\subset s_0\N\subset s_1\N$, it is sufficient to show that $J\cap (p\N-1)\subset s_1\N$. Let $j\in J\cap (p\N-1)$. Let $p^k$ be the highest power of $p$ dividing $M+1$. Applying \cite[Lemma 2.1]{Index} we have that $(M+1)p^vs_1=(M+1)s_0\in J$ satisfies $ \left(\begin{array}{c}
            (M+1)s_0+1\\p^{v+k}
        \end{array}\right) $ is not divisible by $p$. Therefore applying Lemma \ref{admissible pair}, we get $\ell:=(M+1)p^vs_1+p^{v+k}j\in J$.
        As $\ell+1$ is not divisible by $p$, we have $\ell\in s_1\N$ and hence $p^{v+k}j=\ell-(M+1)p^vs_1\in s_1\N$. Thus, $j\in s_1 \N$ and so we obtain that $J\subset s_1\N$.

        By \eqref{F-temp}, we have $J\subset s_1\N\cap (p^v\N\cup (p\N-1))$ and hence
        $$2\leq |T|\leq 1+|\{n|0<n<s_0 \mbox{ with } s_1|n, p|(n+1)\}|=1+p^{k-1}.$$
        From \eqref{F-lower central series} and $ s|s_1 $, we get that $\dense(J)=\dfrac{|T|}{s_0}=\dfrac{1+t}{sup^v}$ for some $u\in \N, t\in\{1,2,\dots, p^{k-1}\}.$

        Now we show that $J\cap (p^v\N\cup (p^k\N-1))$ is a cofinite subset of $s_1\N\cap (p^v\N\cup (p^k\N-1))$. Since $p^v$ and $s_1$ are coprime, from \eqref{F-temp}, it is sufficient to prove that $J\cap (p^v\N-1)\neq\varnothing$. We will prove this by induction. By assumption, we have $J\cap (p\N-1)\neq \varnothing$. Suppose that $j=np^a-1\in J$ for some $a,n\in \N$ with $n$ is not divisible by $p$. Then from Lemma \ref{admissible pair}, $n^2p^{2a}-1=j+(j+1)j\in J$ and therefore $J\cap (p^{a+1}\N-1)\neq \varnothing$. Hence by induction, $J\cap (p^v\N-1)\neq \varnothing$.
    \end{proof}

    \begin{proof}[Proof of Theorem \ref{T-main 2}]
        Let $(I,J)$ be an admissible pair then
        \begin{align*}
            \dim_H(\cR(I,J))=&\liminf_{n\in \N}\dfrac{\log|\cH(I)\cH^{n+1}/\cH^{n+1}|+\log|\cN(J)\cN^{n+1}/\cN^{n+1}|}{\log|\cH/\cH^{n+1}|+\log|\cN/\cN^{n+1}|}\\=&\liminf_{n\in \N}\dfrac{|\{i\in I|i\leq n \}|+|\{ j\in J|j\leq n \}|}{2n}\\=&\dfrac{1}{2}(\ldense(I)+\ldense(J)).
        \end{align*}
        If $(I,J)$ satisfies the condition 1) in Lemma \ref{L-characterizations of admissible pairs} then combining Lemma \ref{admissible pair} (1) and \cite[Theorems 1.1 and 1.8]{Index} we get that \begin{align}
            \label{F-temp 2}
            [0,\dfrac{1}{2p}]\cup\bigg\{\dfrac{1}{2p}+\dfrac{1}{2p^r}|r\in\N\bigg\}\cup\bigg\{\dfrac{1}{2s}|s\in\N\bigg\}=\{\dim_H(\cR(\emptyset,J))|J\text{ satisfies Lemma \ref{admissible pair} (1)}\}.
        \end{align}

        If $(I,J)$ satisfies the condition 2)i) in Lemma \ref{L-characterizations of admissible pairs} then for this case, if $ r\geq 1 $ $ \dim_H(\cR(I,J))\leq\dfrac{1}{2sp^r}+\dfrac{1}{2sp}\leq\dfrac{1}{p} $. If $ r=0 $, $ \dim_H(\cR(I,J))\leq\dfrac{1}{2s}+\dfrac{1}{2sp} $ and if $ s> p $ we still have $ \dim_H(\cR(I,J))\leq\dfrac{1}{p} $.

        If $(I,J)$ satisfies the condition 2)ii) in Lemma \ref{L-characterizations of admissible pairs} then $$ \dim_H(\cR(I,J))=\dfrac{1}{2sp^r}+\dfrac{1}{2su}. $$

        If $(I,J)$ satisfies the condition 2)ii) in Lemma \ref{L-characterizations of admissible pairs} then  $ \dim_H(\cR(I,J))=\dfrac{1}{2sp^r}+\dfrac{1+t}{2sup^v} $. If $ r>1 $ or $ s>1 $ or $ u>1 $ or $ t<p^{v-1} $ then $ \dim_H(\cR(I,J))\leq\dfrac{1}{p} $. If $ r\leq 1,s=1,u=1 $ and $ t=p^{v-1} $ then $ \dim_H(\cR(I,J))=\dfrac{1}{2p^r}+\dfrac{1}{2p}+\dfrac{1}{2p^v} $.

        Combining all these cases, we have \begin{align*}
            \inspec(\cR)\subseteq[0,\dfrac{1}{p}]&\cup\bigg\{\dfrac{1}{p}+\dfrac{1}{2p^r}|r\in\N\bigg\}\cup\bigg\{\dfrac{1}{2}+\dfrac{1}{2p}+\dfrac{1}{2p^r}|r\in\N\bigg\}\cup\\&\cup\bigcup_{s<p}[\dfrac{1}{2s},\dfrac{1}{2s}(1+\dfrac{1}{p})]\cup\bigg\{\dfrac{1}{2sp^r}+\dfrac{1}{2su}|s,u\in\N,r\in\N\cup\{0\}\bigg\}.
        \end{align*}
        Now we will prove the ``$\supseteq$'' inclusion.

        Let $ I=p\N,J=I(\xi) $ for $ \xi\in[0,1/p] $. Then by Lemmas \ref{admissible pair} and \ref{L-the reverse inclusion} we get that $(I,J)$ is an admissible pair and $ \dim_H(\cR(I,J))=\dfrac{1}{2p}+\dfrac{\xi}{2} $. So $ [\dfrac{1}{2p},\dfrac{1}{p}]\subseteq\inspec(\cR) $.

        Also with $ I=p\N $ we can take $ J=p^r\N\cup(p\N-1) $ and so $ \{\dfrac{1}{p}+\dfrac{1}{2p^r}|r\in\N\}\subseteq\inspec(\cR) $. Take $ I=\N,J=p^r\N\cup(p\N-1) $ we have $ \bigg\{\dfrac{1}{2}+\dfrac{1}{2p}+\dfrac{1}{2p^r}|r\in\N\bigg\}\subseteq\inspec(\cR) $.

        Take $ I=s\N,J=I(\xi)\cap s\N $ for $ s<p,\xi\in[0,1/p] $. By Lemmas \ref{admissible pair} and \ref{L-the reverse inclusion} we have $ \dim_H(\cR(I,J))=\dfrac{1}{2s}+\dfrac{\xi}{2s} $, which means $ [\dfrac{1}{2s},\dfrac{1}{2s}(1+\dfrac{1}{p})]\subseteq\inspec(\cR) $.

        For $ \bigg\{\dfrac{1}{2sp^r}+\dfrac{1}{2su}|s,u\in\N,r\in\N\cup\{0\}\bigg\}\subseteq\inspec(\cR) $, we take $ I=sp^r\N $ and $ J=su\N $.
    \end{proof}

    \begin{remark}

    \begin{itemize}
            \item[{\rm (a)}]  In contrast to the Nottingham group $\cN(\F_p)$, Hausdorff dimensions of the Riordan group $\cR(\F_p)$ do depend on the choice of the filtrations.
            \item[{\rm (b)}] If $ \cR(I,J) $ is just infinite then either $ I $ or $ J $ is an empty set. Recall that an infinite topological group is \textit{just infinite} if every non-trivial closed normal subgroup has finite index.
        \end{itemize}
    \end{remark}
    \begin{proof}
        \begin{itemize}
            \item[{\rm (a)}] Look back to Lemma \ref{Gsigma}, when we choose $ \sigma(n)=n $ we have $$ \dim_H(\cR(I,J))=\frac{1}{2}(\ldense(I)+\ldense(J)) .$$ But if we choose $ \sigma(n)=\lceil n/2\rceil $ we have $$ \dim_H(\cR(I,J))=\frac{1}{3}\ldense(I)+\frac{2}{3}\ldense(J) .$$
            \item[{\rm (b)}] Suppose that $ I $ and $ J $ are not empty. The set $ \cH(I)\rtimes\{x\} $ is a non-trivial closed normal subgroup of $ \cR(I,J) $ with the index equal to $ p^{|J|} $. Because $ \cR(I,J) $ is just infinite so $ |J| $ must be finite. Let $ j=\max J=sp^r-1 $ then $ j+p^rj\in J $, which is a contradiction.
        \end{itemize}
    \end{proof}

    \begin{remark}
    From \cite[Theorem 1.6]{BS} we know that $\hspec(\cN(\F_p))\cap (\dfrac{1}{2},1)=\varnothing$ if $p>5$ and $\hspec(\cN(\F_p))\cap (\dfrac{3}{5},1)=\varnothing$ if $p=5$. In particular, 1 is an isolated point of $\hspec(\cN(\F_p))$ for $p\geq 5$. It is interesting to investigate whether 1 is an isolated point of $\hspec(\cR(\F_p))$.
    \end{remark}
    \begin{bibdiv}
        \begin{biblist}
            \bib{Ab}{article}{
                author={Abercrombie, A. G.},
                title={Subgroups and subrings of profinite rings},
                journal={Math. Proc. Cambridge Philos. Soc.},
                volume={116},
                date={1994},
                number={2},
                pages={209--222},

            }
            \bib{Babenko13}{article}{
                author={Babenko, I. K.},
                title={Algebra, geometry and topology of the substitution group of formal
                    power series},
                language={Russian, with Russian summary},
                journal={Uspekhi Mat. Nauk},
                volume={68},
                date={2013},
                number={1(409)},
                pages={3--76},
                issn={0042-1316},
                translation={
                    journal={Russian Math. Surveys},
                    volume={68},
                    date={2013},
                    number={1},
                    pages={1--68},
                },
            }
            \bib{BB08}{article}{
                author={Babenko, I. K.},
                author={Bogaty\u\i , S. A.},
                title={On the substitution group of formal integer series},
                language={Russian, with Russian summary},
                journal={Izv. Ross. Akad. Nauk Ser. Mat.},
                volume={72},
                date={2008},
                number={2},
                pages={39--64},
                issn={1607-0046},
                translation={
                    journal={Izv. Math.},
                    volume={72},
                    date={2008},
                    number={2},
                    pages={241--264},
                },
            }

            \bib{Index}{article}{
                title = {Index-subgroups of the Nottingham group},
                journal = {Advances in Mathematics},
                volume = {180},
                number = {1},
                pages = {187 - 221},
                year = {2003},
		author = {Y. Barnea and B. Klopsch}
            }
            \bib{BS}{article}{
	      author={Barnea, Y.},
	      author={Shalev, A.},
                title={Hausdorff dimension, pro-$p$ groups, and Kac-Moody algebras},
                journal={Trans. Amer. Math. Soc.},
                volume={349},
                date={1997},
                number={12},
                pages={5073--5091},

            }

            \bib{Camina}{article}{
	      author={Camina, R.},
                title={The Nottingham group},
                conference={
                    title={New horizons in pro-$p$ groups},
                },
                book={
                    series={Progr. Math.},
                    volume={184},
                    publisher={Birkh\"{a}user Boston, Boston, MA},
                },
                date={2000},
                pages={205--221},

            }
            \bib{MR4119405}{article}{
	      author={Cheon, G.S.},
	      author={Jung, J.H.},
	      author={Kang, B.},
	      author={Kim, H.},
	      author={Kim, S.R.},
	      author={Kitaev, S.},
	      author={Mojallal, S.A.},
                title={Counting independent sets in Riordan graphs},
                journal={Discrete Math.},
                volume={343},
                date={2020},
                number={11},
                pages={112043, 10},

            }
            \bib{CLMPS}{article}{
	      author={Cheon, G.S.},
	      author={Luz\'{o}n, A.},
	      author={Mor\'{o}n, M.A.},
	      author={Prieto-Martinez, L. F.},
		author={Song, M.},
                title={Finite and infinite dimensional Lie group structures on Riordan
                    groups},
                journal={Adv. Math.},
                volume={319},
                date={2017},
                pages={522--566},

            }
            \bib{dSF}{article}{
	      author={du Sautoy, M.},
	      author={Fesenko, I.},
                title={Where the wild things are: ramification groups and the Nottingham
                    group},
                conference={
                    title={New horizons in pro-$p$ groups},
                },
                book={
                    series={Progr. Math.},
                    volume={184},
                    publisher={Birkh\"{a}user Boston, Boston, MA},
                },
                date={2000},
                pages={287--328},

            }

            \bib{Ershov}{article}{
	      author={Ershov, M.},
                title={The Nottingham group is finitely presented},
                journal={J. London Math. Soc. (2)},
                volume={71},
                date={2005},
                number={2},
                pages={362--378},

            }
            \bib{Ershov1}{article}{
	      author={Ershov, M.},
                title={On the commensurator of the Nottingham group},
                journal={Trans. Amer. Math. Soc.},
                volume={362},
                date={2010},
                number={12},
                pages={6663--6678},

            }
            \bib{Falconer}{book}{
	      author={Falconer, K.},
                title={Fractal geometry},
                edition={3},
                note={Mathematical foundations and applications},
                publisher={John Wiley \& Sons, Ltd., Chichester},
                date={2014},

            }

            \bib{GG}{article}{
	      author={Gon\c{c}alves, D. L.},
	      author={Guaschi, J.},
                title={The lower central and derived series of the braid groups of the
                    sphere},
                journal={Trans. Amer. Math. Soc.},
                volume={361},
                date={2009},
                number={7},
                pages={3375--3399},

            }
            \bib{MR3464075}{article}{
	      author={He, T.X.},
                title={Shift operators defined in the Riordan group and their
                    applications},
                journal={Linear Algebra Appl.},
                volume={496},
                date={2016},
                pages={331--350},

            }

            \bib{Jennings}{article}{
                author={Jennings, S. A.},
                title={Substitution groups of formal power series},
                journal={Canadian J. Math.},
                volume={6},
                date={1954},
                pages={325--340},

            }
            \bib{Johnson}{article}{
                author={Johnson, D. L.},
                title={The group of formal power series under substitution},
                journal={J. Austral. Math. Soc. Ser. A},
                volume={45},
                date={1988},
                number={3},
                pages={296--302},

            }
            \bib{KLP}{book}{
   author={Klaas, G.},
   author={Leedham-Green, C. R.},
   author={Plesken, W.},
   title={Linear pro-$p$-groups of finite width},
   series={Lecture Notes in Mathematics},
   volume={1674},
   publisher={Springer-Verlag, Berlin},
   date={1997},

}
            \bib{Klopsch}{article}{
	      author={Klopsch, B.},
                title={Automorphisms of the Nottingham group},
                journal={J. Algebra},
                volume={223},
                date={2000},
                number={1},
                pages={37--56},

            }
            \bib{LM}{book}{
                author={Leedham-Green, C. R.},
                author={McKay, S.},
                title={The structure of groups of prime power order},
                series={London Mathematical Society Monographs. New Series},
                volume={27},
                note={Oxford Science Publications},
                publisher={Oxford University Press, Oxford},
                date={2002},

            }
            \bib{LMMPS}{article}{
	      author={Luz\'{o}n, A.},
	      author={Merlini, D.},
	      author={Mor\'{o}n, M. A.},
	      author={Prieto-Martinez, L. F.},
	      author={Sprugnoli, R.},
                title={Some inverse limit approaches to the Riordan group},
                journal={Linear Algebra Appl.},
                volume={491},
                date={2016},
                pages={239--262},

            }
            \bib{MR2451085}{article}{
	      author={Luz\'{o}n, A.},
	      author={Mor\'{o}n, M. A.},
                title={Ultrametrics, Banach's fixed point theorem and the Riordan group},
                journal={Discrete Appl. Math.},
                volume={156},
                date={2008},
                number={14},
                pages={2620--2635},

            }
            \bib{MR3771671}{article}{
	      author={Merlini, D.},
	      author={Nocentini, M.},
                title={Algebraic generating functions for languages avoiding Riordan
                    patterns},
                journal={J. Integer Seq.},
                volume={21},
                date={2018},
                number={1},
                pages={Art. 18.1.3, 25},

            }
            \bib{MS}{article}{
	      author={Merlini, D.},
	      author={Sprugnoli, R.},
                title={Algebraic aspects of some Riordan arrays related to binary words
                    avoiding a pattern},
                journal={Theoret. Comput. Sci.},
                volume={412},
                date={2011},
                number={27},
                pages={2988--3001},

            }

            \bib{MSV}{article}{
	      author={Merlini, D.},
	      author={Sprugnoli, R.},
	      author={Verri, M. C.},
                title={Combinatorial sums and implicit Riordan arrays},
                journal={Discrete Math.},
                volume={309},
                date={2009},
                number={2},
                pages={475--486},

            }
            \bib{RZ}{book}{
	      author={Ribes, L.},
	      author={Zalesskii, P.},
                title={Profinite groups},
                series={Ergebnisse der Mathematik und ihrer Grenzgebiete. 3. Folge. A
                    Series of Modern Surveys in Mathematics [Results in Mathematics and
                    Related Areas. 3rd Series. A Series of Modern Surveys in Mathematics]},
                volume={40},
                edition={2},
                publisher={Springer-Verlag, Berlin},
                date={2010},

            }
            \bib{Shalev}{article}{
	      author={Shalev, A.},
                title={Some problems and results in the theory of pro-$p$ groups},
                conference={
                    title={Groups '93 Galway/St. Andrews, Vol. 2},
                },
                book={
                    series={London Math. Soc. Lecture Note Ser.},
                    volume={212},
                    publisher={Cambridge Univ. Press, Cambridge},
                },
                date={1995},
                pages={528--542},

            }

            \bib{Shapiro}{article}{
	      author={Shapiro, L. W.},
	      author={Getu, S.},
	      author={Woan, W. J.},
	      author={Woodson, L. C.},
                title={The Riordan group},
                note={Combinatorics and theoretical computer science (Washington, DC,
                    1989)},
                journal={Discrete Appl. Math.},
                volume={34},
                date={1991},
                number={1-3},
                pages={229--239},

            }
            \bib{Sprugnoli94}{article}{
	      author={Sprugnoli, R.},
                title={Riordan arrays and combinatorial sums},
                journal={Discrete Math.},
                volume={132},
                date={1994},
                number={1-3},
                pages={267--290},

            }
            \bib{Sprugnoli11}{article}{
	      author={Sprugnoli, R.},
                title={Combinatorial sums through Riordan arrays},
                journal={J. Geom.},
                volume={101},
                date={2011},
                number={1-2},
                pages={195--210},

            }
            \bib{Sz}{article}{
	      author={Szegedy, B.},
                title={Almost all finitely generated subgroups of the Nottingham group
                    are free},
                journal={Bull. London Math. Soc.},
                volume={37},
                date={2005},
                number={1},
                pages={75--79},

            }
            \bib{York1}{article}{
                author={York, I. O.},
                title={The exponent of certain finite $p$-groups},
                journal={Proc. Edinburgh Math. Soc. (2)},
                volume={33},
                date={1990},
                number={3},
                pages={483--490},

            }
            \bib{York}{article}{
                author={York, I. O.},
                title={The Group of Formal Power Series under Substitution},
                journal={Ph.D. Thesis, Nottingham},
                date={1990},

            }

            \bib{Wilson}{book}{
	      author={Wilson, J. S.},
                title={Profinite groups},
                series={London Mathematical Society Monographs. New Series},
                volume={19},
                publisher={The Clarendon Press, Oxford University Press, New York},
                date={1998},

            }

        \end{biblist}
    \end{bibdiv}
\end{document}